\documentclass[a4paper,10pt]{amsart}

\usepackage[
  margin=30mm,
  marginparwidth=25mm,     % + <- Width of your marginpar
  marginparsep=2mm,       % + <- Gap between text block and marginpar
  bottom=25mm,
  ]{geometry}

\usepackage[bbgreekl]{mathbbol}
\usepackage{amsfonts}
\usepackage{latexsym,amssymb,amsthm,mathrsfs,amsmath,amscd,enumerate,enumitem, amscd,color}

\usepackage{tikz}

\DeclareSymbolFontAlphabet{\mathbb}{AMSb}
\DeclareSymbolFontAlphabet{\mathbbl}{bbold}

\newtheorem{ThA}{Theorem}

\newtheorem{thm}{Theorem}[section]

\theoremstyle{definition}

 \theoremstyle{remark}

\usepackage{hyperref}

\newcommand{\N}{\mathbb{N}}
\newcommand{\Z}{\mathbb{Z}}
\newcommand{\LL}{\mathcal{L}}

\newcommand{\R}{\mathbb{R}}

\newcommand{\edproof}{ $\hfill {\Box}$}
\newcommand{\black}{\textcolor{black}}

\numberwithin{equation}{section}
\allowdisplaybreaks

\begin{document}

\footnotetext{Last modification: \today.}

\title[]{Extension theorems for logarithmic Schr\"odinger and discrete Laplacian operators}

\author[J. J. Betancor]{J. J. Betancor}

\author[M. de Le\'on-Contreras]{M. de Le\'on-Contreras}

\author[L. Rodr\'{\i}guez-Mesa] {L. Rodr\'{\i}guez-Mesa}

\address{Jorge J. Betancor, Marta de Le\'on-Contreras, Lourdes Rodr\'iguez-Mesa\newline
	Departamento de An\'alisis Matem\'atico, Universidad de La Laguna,\newline
	Campus de Anchieta, Avda. Astrof\'isico S\'anchez, s/n,\newline
	38721 La Laguna (Sta. Cruz de Tenerife), Spain}
\email{jbetanco@ull.es, 
mleoncon@ull.edu.es,
lrguez@ull.edu.es}

\thanks{The authors are partially supported by the Grant  PID2023-148028NB-I00 funded by MICIU/AEI/10.13039/501100011033 and by ERDF/EU”.
}

\subjclass[2010]{35J10, 42B37, 47D06}

\keywords{logarithm operator, Schr\"odinger operator, discrete Laplacian operator, extension problem}

%\date{}

%%% ----------------------------------------------------------------------

\begin{abstract}
 {In this paper we consider logarithmic operators in two different contexts: the  adapted to (continuous) Schr\"odinger operators and the classical discrete setting. The Schr\"odinger operator $\mathcal L_V$ on $\mathbb R^d$ is defined as $\mathcal L_V=-\Delta+V$, where the potential $V$ is nonnegative and satisfies a reverse H\"older inequality and, as usual, $\Delta$ denotes the Euclidean Laplacian, while the discrete Laplacian $\Delta_d$ on $\mathbb Z$ is given by $(\Delta_df)(n)=f(n+1)-2f(n)+f(n-1)$, $n\in \mathbb Z$. Both logarithmic operators $\log \mathcal L_V$ and $\log (-\Delta_d)$ are nonlocal operators and we will define them through suitable extension problems. The extension problems for logarithmic operators are inspired by the one introduced by  Caffarelli and Silvestre for the fractional Laplacian but, in this case, the logarithmic operators are obtained as the boundary values of the extension in a more involved way.}

%%% VERSION ANTERIOR%%%%%%%  In this paper we consider logarithmic operators in two different settings. The Schr\"odinger operator $\mathcal L_V$ in $\mathbb R^d$ is defined by $\mathcal L_V=-\Delta+V$, where the potential $V$ is nonnegative and as usual $\Delta$ represents the Euclidean Laplacian. We define the discrete Laplacian $\Delta_d$ on $\mathbb Z$ by $(\Delta_df)(n)=f(n+1)-2f(n)+f(n-1)$, $n\in \mathbb Z$. The logarithmic operators $\log \mathcal L_V$ and $\log \Delta_d$ are nonlocal operators and we will define them through extension problems. The extension problem for logarithmic operators is inspired in the one of Caffarelli and Silvestre for the fractional Laplacian, but in this case the logarithmic operators are obtained as the boundary value of the extension in a more involved way.
\end{abstract}

%%% ----------------------------------------------------------------------
\maketitle
%%% ----------------------------------------------------------------------

\section{Introduction}
In this paper we show that logarithmic operators associated with Schrödinger operators on $\mathbb{R}^d$ and with the discrete Laplacian on $\mathbb{Z}$ can be obtained through suitable extension problems.  These logarithmic operators are of non-local nature. The extension problem associated with the logarithmic Laplacian on $\mathbb R^d$ has been recently introduced in \cite{ChHW}. Although this extension problem was inspired by the one developed by Caffarelli and Silvestre (see \cite{CS}) for the fractional Laplacian, the logarithmic operator appears as the boundary values of the solution to the extension problem in a more involved way.

The celebrated Caffarelli-Silvestre extension theorem can be stated as follows.
\begin{ThA}
Let $\sigma \in (0,1)$.   We consider the Dirichlet problem
$$
\left\{ \begin{array}{cc}
     \nabla \cdot(t^{1-2\sigma}\nabla u(x,t))=0,& (x,t)\in\R^d\times(0,\infty)  \\
    u(0,x)=f(x), & x\in\R^d,
\end{array}\right.
$$
where $f\in\mathcal{S}(\R^d)$, the space of Schwartz functions, and $\nabla=(\partial_{x_1}, \dots,\partial_{x_n},\partial_t)$, and the associated Dirichlet to Neumann map, given by $f\in\mathcal{S}(\R^d) \mapsto  -\lim_{t\to 0^+}t^{1-2\sigma}\partial_t u(\cdot, t).$ Then,  the last mapping coincides with $\frac{\Gamma(1-\sigma)}{2^{2\sigma-1}}(-\Delta)^\sigma$, where $(-\Delta)^\sigma$ is the $\sigma-$fractional Laplacian.

\end{ThA}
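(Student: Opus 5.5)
We argue on the Fourier side. We take $u$ to be the bounded solution (decaying as $t\to\infty$) and write $\hat u(\xi,t)$ for its partial Fourier transform in $x$. Since $\nabla\cdot(t^{1-2\sigma}\nabla u)=t^{1-2\sigma}(\Delta_x u+\partial_t^2u)+(1-2\sigma)t^{-2\sigma}\partial_t u$, the equation becomes, for each fixed $\xi\neq 0$, the ODE
$$
\partial_t^2\hat u(\xi,t)+\frac{1-2\sigma}{t}\,\partial_t\hat u(\xi,t)-|\xi|^2\hat u(\xi,t)=0,\qquad \hat u(\xi,0)=\hat f(\xi).
$$
The substitution $\hat u(\xi,t)=\hat f(\xi)\,\phi(|\xi|t)$ reduces this to the single ODE $\phi''+\frac{1-2\sigma}{s}\phi'-\phi=0$, with $\phi(0)=1$ and $\phi$ bounded at infinity. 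Writing $\phi(s)=s^\sigma\psi(s)$ turns it into the modified Bessel equation of order $\sigma$. The bounded solution is therefore
$$
\phi(s)=\frac{2^{1-\sigma}}{\Gamma(\sigma)}\,s^\sigma K_\sigma(s),
$$
where the constant is fixed by the small-$s$ asymptotics $K_\sigma(s)\sim \tfrac12\Gamma(\sigma)(s/2)^{-\sigma}$, which gives $\phi(0)=1$. Uniqueness among bounded solutions follows because the other solution, $s^\sigma I_\sigma(s)$, grows exponentially.

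Next we compute the Neumann data. By the chain rule, $t^{1-2\sigma}\partial_t\hat u(\xi,t)=\hat f(\xi)\,|\xi|^{2\sigma}\,s^{1-2\sigma}\phi'(s)$ with $s=|\xi|t$. We then use $\frac{d}{ds}\big(s^\sigma K_\sigma(s)\big)=-s^\sigma K_{\sigma-1}(s)=-s^\sigma K_{1-\sigma}(s)$ together with $K_{1-\sigma}(s)\sim\tfrac12\Gamma(1-\sigma)(s/2)^{\sigma-1}$ as $s\to0^+$. This gives
$$
\lim_{s\to0^+}s^{1-2\sigma}\phi'(s)=-\frac{2^{1-\sigma}}{\Gamma(\sigma)}\cdot\frac{\Gamma(1-\sigma)}{2^{\sigma}}=-\frac{\Gamma(1-\sigma)}{2^{2\sigma-1}\Gamma(\sigma)}.
$$
Consequently $-\lim_{t\to0^+}t^{1-2\sigma}\partial_t\hat u(\xi,t)=c_\sigma|\xi|^{2\sigma}\hat f(\xi)$ with $c_\sigma=\Gamma(1-\sigma)/(2^{2\sigma-1}\Gamma(\sigma))$. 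Since $(-\Delta)^\sigma$ has symbol $|\xi|^{2\sigma}$, this identifies the Dirichlet-to-Neumann map with $c_\sigma(-\Delta)^\sigma$.

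The constant obtained this way carries an extra factor $1/\Gamma(\sigma)$ relative to the statement. The statement's constant corresponds to normalizing the extension as $u=\frac{1}{\Gamma(\sigma)}\int_0^\infty e^{-t^2/(4r)}e^{r\Delta}f\,\frac{dr}{r^{1-\sigma}}\cdot t^{2\sigma}\cdot\frac{1}{4^\sigma}\dots$, and so on. In other words, the constant depends on whether $\phi(0)=1$ or whether $u(\cdot,0)=(-\Delta)^{-\sigma}$-scaled data is used. I would check the normalization against the Stinga–Torrea semigroup formula $u(x,t)=\frac{t^{2\sigma}}{4^\sigma\Gamma(\sigma)}\int_0^\infty e^{-t^2/(4r)}e^{r\Delta}f(x)\,\frac{dr}{r^{1+\sigma}}$. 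Reconciling this constant is the main bookkeeping obstacle.

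The remaining analytic step is to justify passing the limit $t\to0^+$ inside the inverse Fourier transform. For $f\in\mathcal S(\mathbb R^d)$ this follows from dominated convergence. The bound $|s^{1-2\sigma}\phi'(s)|\le C$ holds uniformly in $s>0$, since $\phi'$ decays exponentially at infinity and the expression has a finite limit at $0$. Hence $|\xi|^{2\sigma}|\hat f(\xi)|$ is an integrable majorant, and the limit holds pointwise and uniformly in $x$.
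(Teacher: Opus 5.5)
The paper gives no proof of this statement; it is quoted from \cite{CS} as background, so there is no internal argument to compare with. Your Fourier--Bessel argument is the standard one, and its mathematical content is correct. This covers the reduced ODE, the substitution $\phi=s^\sigma\psi$ leading to the modified Bessel equation of order $\sigma$, the normalization $\phi(s)=\frac{2^{1-\sigma}}{\Gamma(\sigma)}s^\sigma K_\sigma(s)$, and the identity $(s^\sigma K_\sigma)'=-s^\sigma K_{1-\sigma}$. It also covers the dominated convergence step: $s^{1-2\sigma}\phi'(s)=-\frac{2^{1-\sigma}}{\Gamma(\sigma)}s^{1-\sigma}K_{1-\sigma}(s)$ is bounded on $(0,\infty)$, which gives the majorant $C|\xi|^{2\sigma}|\hat f(\xi)|$ and uniform convergence in $x$. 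The constant you obtain, $\frac{\Gamma(1-\sigma)}{2^{2\sigma-1}\Gamma(\sigma)}$, is the right one.

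What is wrong is your third paragraph, which should be deleted rather than ``reconciled''. The constant is not a normalization choice. Once the boundary datum is $u(\cdot,0)=f$ and the bounded solution is selected, $\phi$ is forced, and so is $\lim_{s\to0^+}s^{1-2\sigma}\phi'(s)$.

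The Stinga--Torrea formula you mention produces exactly your $\phi$. On the Fourier side it becomes $\hat u(\xi,t)=\frac{1}{\Gamma(\sigma)}\int_0^\infty e^{-r-t^2|\xi|^2/(4r)}r^{\sigma-1}\,dr\;\hat f(\xi)$. Since $\int_0^\infty e^{-r-z^2/(4r)}r^{\sigma-1}\,dr=2(z/2)^\sigma K_\sigma(z)$, it yields the same constant $\Gamma(1-\sigma)/(4^{\sigma-1/2}\Gamma(\sigma))$ found in \cite{ST}.

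The correct conclusion is that the constant in the statement as printed is a misprint: the factor $\Gamma(\sigma)$ in the denominator is missing. Read literally, the statement is false for every $\sigma\in(0,1)$, because $\Gamma(\sigma)>1$ there. The quickest check is $\sigma=1/2$. The problem is then the harmonic extension, $\hat u(\xi,t)=e^{-t|\xi|}\hat f(\xi)$, which is your $\phi$ since $K_{1/2}(s)=\sqrt{\pi/(2s)}\,e^{-s}$. Its Dirichlet-to-Neumann map is exactly $(-\Delta)^{1/2}$, not $\sqrt{\pi}\,(-\Delta)^{1/2}$. You should say this explicitly and prove the theorem with the corrected constant, also reading the boundary condition as $u(x,0)=f(x)$.

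One smaller point concerns uniqueness. Your argument is carried out mode by mode. That suffices to identify the solution you construct, but it does not prove uniqueness among all bounded solutions: for a general bounded $u$ the partial Fourier transform is only a tempered distribution in $\xi$. A condition at infinity is genuinely needed, since $t^{2\sigma}$ solves the equation with zero Dirichlet data and has $t^{1-2\sigma}\partial_t t^{2\sigma}=2\sigma$. It is cleanest to define the Dirichlet-to-Neumann map through the bounded solution you construct, or to invoke a Liouville-type theorem if you want uniqueness.
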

The Caffarelli-Silvestre extension has opened numerous and new lines of research, one of which focuses on identifying operators that admit a representation through an extension problem. In this direction, Kwa\'snicki and Mucha (see \cite{KM}) proved that if $\phi$ is a complete Bernstein function, the operator $\phi(-\Delta)$ can be obtained from an extension problem. The results in \cite{KM}  were extended to other differential operators in \cite{AH} by using \^Ito calculus. Extension problems have been studied in  more abstract settings by Stinga and Torrea (see \cite{ST}), and Galé, Miana and Stinga (see \cite{GMS}). Moreover, the fractional Laplace Beltrami operator on some noncompact manifolds has been defined through an extension problem in \cite{BGS} and \cite{BP}. Arendt, ter Elst and Warma (\cite{AEW})  considered the problem for sectorial operators in Hilbert spaces.

The logarithmic Laplacian operator, $\log(-\Delta),$ was studied in \cite{ChW}, where a pointwise representation was obtained (see \cite[Theorem 1.1]{ChW}). Chen and Véron (\cite{ChV1}) analyzed the Cauchy problem associated with $\log(-\Delta),$ and proved the existence of the fundamental solution of the logarithmic Laplacian  in dimension $d\ge 3$. Recently, Lee (\cite{Lee}) extended the result about the fundamental solution of $\log(-\Delta)$  by using an approach via the division problem. On the other hand, spectral properties for the logarithmic Laplacian have been studied in  \cite{ChV2}, \cite{ChW}, and \cite{JSW} while the $m-$order logarithmic Laplacian was studied in \cite{ChH}.

Logarithmic operators have also been defined in several other settings. Logarithmic Bessel operators were analyzed in \cite{FLZ}, while Fall and Felli (see  \cite{FF2} and \cite{FF1}) proved unique continuation properties and essential self-adjointness of the relativistic Schrödinger operator with a singular homogeneous potential defined by
$$
H=(-\Delta+m^2)^s-\frac{a(x/|x|)}{|x|^{2s}}-h(x), \quad m \ge 0,
$$
where the pointwise representation for $(-\Delta+m^2)^s$, $0<s<1$, can be found in \cite[(1.3)]{FF1}. 

On the other hand, Feulefack (see \cite{F1,F2}) introduced and studied the logarithm for the operator $I+(-\Delta)^s$, $s\in(0,1]$. Until very recently there were no results of the logarithmic Laplacian on manifolds. In \cite{Pr}, the logarithmic operator $\log (-\Delta+mI)$, for $m>1$, was defined on closed manifolds, while Chen (\cite{Ch}) defined $\log(-\Delta)$ on general Riemannian manifolds, and Chen and Xu (see \cite{ChX}) on general graphs. 

The logarithmic Schrödinger operator, $\log(-\Delta+V)$,  where $V$ is a nonnegative potential satisfying a reverse Hölder inequality, $RH_q$, with $q>d/2$, has been defined recently by Betancor, Dalmasso, Fariña and Quijano in \cite{BDFQ}. Here we summarize the main points in order to state our results, see Section \ref{sec2} for more details.

The Schrödinger operator, $\LL_V=-\Delta+V$, can be defined through the following sesquilinear form
$$
T(f,g)=\int_{\R^d}\nabla f\;\overline{\nabla g}\;dx+\int_{\R^d}V\;f\;\overline{g}\;dx,
$$
for $f,g\in D_T:=\{h\in L^2(\R^d):\: \nabla h\in L^2(\R^d) \text{ and } \sqrt{V} h\in  L^2(\R^d) \}.$

The sesquilinear form $T$ is closed and nonnegative and the domain $D_T$ is dense in $L^2(\R^d)$. Therefore, there exists a selfadjoint operator $\LL_V:  D(\LL_V)=D_T\subset L^2(\R^d)\to L^2(\R^d)$ such that $\langle\LL_Vf,g \rangle=T(f,g)$, $f,g\in D(\LL_V)$ (see \cite[Theorem VIII.15]{RS}). The space $C^\infty_c(\R^n)$ of smooth functions with compact support in $\R^d$ is contained in $D(\LL_V)$
 and $\LL_Vf=-\Delta f+V f$, for every $f\in C^\infty_c(\R^n)$. 

 Hence, there exists a spectral measure $E_V$ supported in the spectrum $\sigma(\LL_V)$ of $\LL_V$ such that
 $$
 \LL_Vf=\int_{[0,\infty)}\lambda\; dE_V(\lambda)f, \quad f\in  D(\LL_V),
 $$
 and $ {D(\LL_V)}=\left\{f\in L^2(\R^d):\:\int_0^\infty \lambda^2\; d\mu_{f,f}^V(\lambda)<\infty\right\}.$ Here, for every $f,g\in  L^2(\R^d),$ $\mu_{f,g}^V(U)=\langle E_V(U)f,g\rangle$, for every Borel subset $U$ of $\R.$ Notice that $ E_V(\{0\})=0$, because $0$ is not an eigenvalue of $\LL_V$.

 Thus, the logarithmic $\log (\LL_V)$ of $\LL_V$ is defined by
 $$
 \log(\LL_V)f=\int_0^\infty \log\lambda \;dE_V(\lambda)f, $$
 for every $f\in D(\log(\LL_V))=\left\{f\in L^2(\R^d):\:\int_0^\infty  {|\log\lambda|^2}\; d\mu_{f,f}^V(\lambda)<\infty\right\}.$

 In \cite[Theorem 1.1 (a)]{BDFQ}, it was established that if $f\in D(\LL_V^{s_0})\cap D(\log(\LL_V)) $, for some $s_0\in (0,1],$ then
 $$
 \log(\LL_V)f=\lim_{s\to 0^+}\frac{\LL_V^{s}f-f}{s},
 $$
where $\LL_V^{s}f=\int_0^\infty\lambda^s \; dE_V(\lambda)f,$ $f\in D(\LL_V^{s}):=\left\{f\in L^2(\R^d):\:\int_0^\infty \lambda^{2s}\; d\mu_{f,f}^V(\lambda)<\infty\right\},$ $s\in(0,1].$

Thus, for every $t>0$  we can define $T_t^Vf=\int_0^\infty e^{-\lambda t} dE_V(\lambda)f$, $f\in L^2(\R^d)$, so that the family $\{ T_t^V\}_{t>0}$ is the $C_0$-semigroup in $L^2(\R^d)$ generated by $\LL_V.$ Then, for every $t>0$, there exists a measurable function $T_t^V:\R^d\times\R^d\to\R$ such that 
\begin{equation}\label{eq1.1}
    T_t^V(f)(x)=\int_{\R^d}T_t^V(x,y)f(y)dy, \quad f\in  L^2(\R^d), \quad x\in\R^d.
\end{equation}
Furthermore, by using the integral representation \eqref{eq1.1},  each $T_t^V$ can be extended as a contraction in $L^p(\R^d)$, so that $\{ T_t^V\}_{t>0}$ is a semigroup of contractions in $L^p(\R^d)$, for $1\le p\le \infty$. However, $\{ T_t^V\}_{t>0}$ is not Markovian, that is, $T_t^V1\neq 1$, $t>0$. This fact leads to essential differences between the proofs of the results concerning the Schrödinger setting and those for the Laplacian.

The following pointwise representation of $ \log(\LL_V)$ was established in \cite[Theorem 1.2]{BDFQ}.

\begin{ThA}\label{ThB}
    Let $d\ge 3$ and $q>d/2$. Suppose that $V\in RH_q$ and $f\in C^\infty_c(\R^d).$ Then,
\begin{align}\label{eq1.6}
((\log \LL_V)f)(x)=&-\int_{B(x,1)}(f(y)-f(x))\int_0^\infty \frac{T_t^V(x,y)}{t} dt dy\nonumber\\
&-\int_{\R^d\setminus B(x,1)}f(y)\int_0^\infty\frac{T_t^V(x,y)}{t} dt dy-f(x)K(x), \quad \text{ for almost all }x\in \R^d, 
\end{align}
where
\begin{align*}
K(x)&=2\log \rho(x)+\int_{\R^d}\int_0^{\rho(x)^2}\frac{T_t^V(x,y)-T_t(x-y)}{t}dydt-\int_{\R^d\setminus B(x,1)}\int_0^{\rho(x)^2}\frac{T_t^V(x,y)}{t} dydt\\
&+\int_{B(x,1)}\int_{\rho(x)^2}^\infty  \frac{T_t^V(x,y)}{t} dydt\;+\;\gamma,\quad x\in \mathbb R^d,
\end{align*}
being $\gamma$ the Euler-Mascheroni constant and $\rho$ the critical radius function.
 \end{ThA}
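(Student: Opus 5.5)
We start from the spectral formula for $\log\lambda$ in terms of the semigroup. For $\lambda>0$ one has the Frullani-type identity
$$
\log\lambda=\int_0^\infty\frac{e^{-t}-e^{-\lambda t}}{t}\,dt .
$$
The plan is to combine it with the cutoff version
$$
\int_0^{a}\frac{1-e^{-\lambda t}}{t}\,dt-\int_a^\infty \frac{e^{-\lambda t}}{t}\,dt=\log(\lambda a)+\gamma,\qquad a>0,
$$
which follows from $\int_0^1\frac{1-e^{-u}}{u}du-\int_1^\infty\frac{e^{-u}}{u}du=\gamma$ after the change of variables $u=\lambda a$. Applying functional calculus with $a=\rho(x)^2$, where we are free to choose $a$ pointwise in $x$ at the end, gives, for $f\in C^\infty_c(\R^d)$,
$$
(\log\LL_V)f(x)=-\int_0^{\rho(x)^2}\frac{T_t^Vf(x)-f(x)}{t}\,dt-\int_{\rho(x)^2}^\infty\frac{T_t^Vf(x)}{t}\,dt-2f(x)\log\rho(x)-\gamma f(x).
$$
To justify this, we first prove it in $L^2$ with a constant $a$ via Fubini on the spectral measure; the integrals converge since $|1-e^{-\lambda t}|\le \lambda t$ and $f\in D(\LL_V)$. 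We then pass to a pointwise statement using the kernel representation \eqref{eq1.1} and local constancy of $\rho$, namely $\rho(x)\sim\rho(y)$ for $|x-y|\lesssim\rho(x)$, by working on balls where $\rho$ is comparable.

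Next we insert the kernel. We write $T_t^Vf(x)-f(x)=\int T_t^V(x,y)(f(y)-f(x))\,dy-f(x)\bigl(1-\int T_t^V(x,y)dy\bigr)$ and split the $y$-integration into $B(x,1)$ and its complement. The non-Markovian defect is rewritten through $1=\int T_t(x-y)dy$, where $T_t$ is the classical heat kernel, so that $\int T_t^V(x,y)dy-1=\int (T_t^V(x,y)-T_t(x-y))dy$. Collecting the $f(x)$ terms over the regions $t<\rho(x)^2$ and $t>\rho(x)^2$ should then produce exactly $K(x)$. Concretely, on $t<\rho^2$ the term $-f(x)\int_{B(x,1)^c}T_t^V/t$ appears because we only subtract $f(x)$ inside the ball. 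On $t>\rho^2$ the term $+f(x)\int_{B(x,1)}T_t^V/t$ appears because we add and subtract $f(x)$ on the ball to obtain the uniform expression $-\int_{B(x,1)}(f(y)-f(x))\int_0^\infty T_t^V/t$. The remaining $f(y)$ pieces recombine into $-\int_{B(x,1)^c}f(y)\int_0^\infty T_t^V(x,y)/t\,dt\,dy$.

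The main obstacle is justifying Fubini and the convergence of each term, since each piece must be absolutely convergent separately. We use the standard Schrödinger kernel bounds for $V\in RH_q$, $q>d/2$:
\begin{itemize}
\item the Gaussian bound with extra decay, $T_t^V(x,y)\le C_N t^{-d/2}e^{-c|x-y|^2/t}(1+\sqrt t/\rho(x))^{-N}$;
\item the perturbation estimate $|T_t^V(x,y)-T_t(x-y)|\le C (\sqrt t/\rho(x))^{\delta}t^{-d/2}e^{-c|x-y|^2/t}$ for $t\le\rho(x)^2$, with some $\delta>0$.
\end{itemize}
With these in hand the pieces are handled as follows.
\begin{itemize}
\item The second estimate makes $\int_0^{\rho^2}\int|T_t^V-T_t|dy\,dt/t$ finite.
\item The first makes $\int_{\rho^2}^\infty\int T_t^V dy\,dt/t$ finite.
\item For $d\ge3$, $\int_0^\infty T_t^V(x,y)dt/t\lesssim|x-y|^{-d}$ near the diagonal. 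It is integrable against $|f(y)-f(x)|\lesssim|x-y|$ on $B(x,1)$, and against $f$ off the ball thanks to the decay.
\end{itemize}
Since $f$ has compact support and the bounds are locally uniform in $x$, dominated convergence yields the identity for almost every $x$.
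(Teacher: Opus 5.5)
Your proposal is correct. The kernel bookkeeping checks out: the $f(x)$-terms collected from $t<\rho(x)^2$ (where you use $\int_{\mathbb R^d}T_t(x-y)\,dy=1$) and from $t>\rho(x)^2$ are exactly the four integrals in $K(x)$, with the stated signs. Your cutoff identity $\int_0^a\frac{1-e^{-\lambda t}}{t}\,dt-\int_a^\infty\frac{e^{-\lambda t}}{t}\,dt=\log(\lambda a)+\gamma$ is also right.

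The paper itself does not prove this statement; it quotes it from [BDFQ, Theorem 1.2]. The surrounding discussion ties it to $\log\mathcal L_Vf=\lim_{s\to0^+}(\mathcal L_V^sf-f)/s$, with $\mathcal L_V^sf=\frac{1}{\Gamma(-s)}\int_0^\infty(T_t^Vf-f)\,t^{-1-s}\,dt$. On that route one splits the $t$-integral at $\rho(x)^2$. The term $\frac1s\bigl(-\frac{f(x)}{\Gamma(-s)}\int_{\rho(x)^2}^\infty t^{-1-s}\,dt-f(x)\bigr)=\frac{f(x)}{s}\bigl(\frac{\rho(x)^{-2s}}{\Gamma(1-s)}-1\bigr)$ then tends to $-f(x)(2\log\rho(x)+\gamma)$. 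So one lands on the same intermediate identity you write down, but only after justifying the passage $s\to0^+$ under the $t$- and $y$-integrals.

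Your Frullani-type formula reaches that identity in one step from the functional calculus, with no limiting process. It also makes transparent that $\gamma+2\log\rho(x)$ is just the constant $\gamma+\log a$ of the cutoff identity at $a=\rho(x)^2$. What the $s$-route buys instead is the connection with $\mathcal L_V^s$. That connection is what the paper relies on to take the right-hand side of the representation as the definition of $\log\mathcal L_Vf$ for $f\in{\rm Lip}^\theta(\mathbb R^d)\cap L^1_0(\mathbb R^d)$.

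A few justifications should be adjusted, none of them serious.

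\textbf{Choosing $a=\rho(x)^2$.} The comparability $\rho(x)\sim\rho(y)$ is not what lets you do this. Set $G(a,x)=-(\log a+\gamma)f(x)+\int_0^a\frac{f(x)-T_t^Vf(x)}{t}\,dt-\int_a^\infty\frac{T_t^Vf(x)}{t}\,dt$. For each fixed $x$ this is independent of $a$, because $G(a,x)-G(b,x)=f(x)\log(b/a)-f(x)\int_a^b\frac{dt}{t}=0$. Hence the a.e.\ identity for $a=1$ gives the one with $a=\rho(x)^2$ off the same null set.

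\textbf{The $L^2$ step.} The tail $\int_a^\infty T_t^Vf\,\frac{dt}{t}$ needs control of $\mu^V_{f,f}$ near $\lambda=0$, where $\int_a^\infty e^{-\lambda t}\frac{dt}{t}\sim\log\frac{1}{\lambda a}$. This follows from $\|T_t^Vf\|_2\le Ct^{-d/4}\|f\|_1$, which gives $\mu^V_{f,f}((0,\varepsilon])\le C\varepsilon^{d/2}$. Separately, $C_c^\infty(\mathbb R^d)\subset D(\mathcal L_V)$ requires $V\in L^2_{\rm loc}$, which can fail when $d=3$ and $q<2$. However, $C_c^\infty(\mathbb R^d)\subset D_T=D(\mathcal L_V^{1/2})$ together with $1-e^{-\lambda t}\le(\lambda t)^{1/2}$ suffices.

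\textbf{A missing estimate.} Your list does not cover $\int_{\mathbb R^d\setminus B(x,1)}\int_0^{\rho(x)^2}T_t^V(x,y)\,\frac{dt}{t}\,dy$, since $|x-y|^{-d}$ is not integrable off the ball. It is finite because $\int_{|x-y|\ge1}T_t(x-y)\,dy\le Ce^{-c/t}$ and the $t$-integral stops at $\rho(x)^2$.
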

Note that, in constrast with the pointwise representation of $\log(-\Delta)$ established in \cite[Theorem 1.1]{ChW}, the corrector factor $K$ in Theorem \ref{ThB} is not constant. This is due to the fact that the semigroup $\{ T_t^V\}_{t>0}$ is not Markovian. In addition, observe that as a special case of Theorem \ref{ThB},  a pointwise representation of $\log(-\Delta+m^2)$ can be obtained.

On the other hand, as it was mentioned in \cite{BDFQ}, if $f\in {\rm Lip}^\theta(\R^d) {\cap L_0^1(\R^d)}$,  {where  ${\rm Lip}^\theta(\R^d)$ denotes the $\theta$-Lipschitz space on $\R^d$ with $\theta\in (0,1]$, and, for $\sigma\ge 0$},
$$
L_\sigma^1(\R^d):=\left\{f \text{ measurable in $\R^d$} : \int_{\R^d}\frac{|f(y)|}{(1+|y|)^{d+\sigma}}dy<\infty\right\},
$$ 
then
\begin{align*}
    \lim_{s\to 0^+}\frac{1}{s}\left( \frac{1}{\Gamma(-s)}\int_0^\infty \frac{T_t^V(f)(x)-f(x)}{t^{s+1}}dt {-f(x)}\right)&\\
    &\hspace{-5cm}=-\int_{B(x,1)}(f( {y})-f( {x}))\int_0^\infty \frac{T_t^V(x,y)}{t} dtdy\\
    &\hspace{-5cm}\quad -\int_{\R^d\setminus B(x,1)}f(y)\int_0^\infty\frac{T_t^V(x,y)}{t} dtdy {-}K(x)f(x), \quad x\in \R^d.
\end{align*}
According to \cite[Proposition 2.4]{BDFQ}, if $s\in (0,1)$ and $f\in D(\LL_V^s)$, then
$$
\LL_V^s(f)=\lim_{m\to\infty}\frac{1}{\Gamma(-s)}\int_{1/m}^m\frac{T_t^V(f)-f}{t^{s+1}}dt,
$$
where the integrals are understood in the $L^2((\frac{1}{m},m))-$Bochner sense, for every $m\in\N$, and the limit is understood in $L^2(\R^d).$ This property justifies to define
\begin{align*}
    ((\log \LL_V)f)(x)=&-\int_{B(x,1)}(f( {y})-f( {x}))\int_0^\infty \frac{T_t^V(x,y)}{t} dt dy\\
& -\int_{\R^d\setminus B(x,1)}f(y)\int_0^\infty\frac{T_t^V(x,y)}{t} dt dy {-}K(x)f(x),
\end{align*}
provided that $f\in  {{\rm Lip}^\theta(\R^d)\cap L_0^1(\R^d)}$, for some $\theta\in (0,1]$.

Before establishing our extension theorem for the nonlocal operator $\log \LL_V$, we introduce some definitions. We say that a function $f\in L^1_{\rm loc}(\R^d)$ has algebraic growth, in short $f\in \mathcal{AG}(\R^d)$, when there exist $C,\sigma>0$ such that
$$
\int_{B(x,1)}|f(y)|dy \le C (1+|x|)^\sigma, \quad x\in \R^d.
$$
We say that a measurable function $f:\R^d\to \R$ is Dini continuous at $x\in\R^d$ when
$$
\int_0^1\frac{w_{f,x}(r)}{r}dr<\infty,
$$
where $w_{f,x}(r)=\sup_{y\in B(x,r)}|f(y)-f(x)|,$ $r\in (0,1)$, and $f$ is uniformly Dini continuous in $\Omega\subset \R^d$ provided that
$$
\int_0^1\frac{w_{f,\Omega}(r)}{r}dr<\infty,
$$
where $w_{f,\Omega}(r)=\sup_{x\in\Omega}w_{f,x}(r),$ $r\in (0,1)$.

 \begin{thm}[Extension problem for the Schrödinger logarithmic operator]\label{Th1.1}
Let $d\ge 3$ and $V\in RH_q$, with $q>d/2.$ Suppose that $f\in  {{\rm Lip}^\theta(\R^d)\cap L_0^1(\R^d)}$, for some $\theta\in (0,1]$. We define
$$
u_f(x,t)=\frac{1}{2}\int_0^\infty T_u^V(f)(x)\frac{e^{-\frac{t^2}{4u}}}{u}du, \quad x\in\R^d, \: t>0.
$$
We have that $u_f\in C^1(\R^{d+1}_+)\cap \mathcal{AG}(\R^{d+1}_+)$ and
\begin{itemize}
    \item[(i)] $\displaystyle\lim_{t\to\infty} u_f(x,t)=0$, for every $x\in \R^d.$\\
      \item [(ii)] $\big(\partial_t^2+\frac{1}{t}\partial_t {-}\LL_V\big) u_f(x,t)=0$, for every $x\in \R^d$ and $t>0.$\\
        \item[(iii)]$\displaystyle\lim_{t\to 0^+}t\partial_t u_f(\cdot,t)=-f$ in $L^1_{\rm loc}(\R^d),$ that is, for every $R>0$,
        $$
        \lim_{t\to 0^+}\int_{B(0,R)}|t\partial_t u_f(x,t)+f(x)| dx=0.
        $$
          \item[(iv)]$\displaystyle\lim_{t\to 0^+}\frac{u_f(\cdot,t)}{\log t}=-f$ in  $L^1_{\rm loc}(\R^d).$\\
          
            \item[(v)]$((\log \LL_V)f)(x)=-{2}\displaystyle\lim_{t\to 0^+}(u_f(x,t)+f(x)\log t)-f(x)h(x)$ in the distributional sense, that is, for every $\varphi\in C^\infty_c(\R^d),$
            \begin{align}\label{eq.1.7}
                \int_{\R^d}f(x)((\log\LL_V)\varphi)(x)dx&=-{2}\lim_{t\to 0^+}\int_{\R^d}(u_f(x,t)+f(x)\log t)\varphi(x)dx\nonumber\\
                &\quad -\int_{\R^d}f(x)h(x)\varphi(x)dx,
            \end{align}
            where
\begin{align*}
h(x)&=K(x)-\int_{B(x,1)}\int_0^\infty\frac{T_u^V (x,y)-T_u(x-y)}{u}dudy-\alpha _d-\beta_d.
\end{align*}
            being $\alpha_d=2\displaystyle\int_0^1 (1+t)^{-d/2}t^{d/2-1}dt$ and $\beta_d=2\displaystyle\int_1^\infty ((r^2+1)^{-1/2}-r^{-d})r^{d-1}dr.$
\end{itemize}
Furthermore, \eqref{eq.1.7} holds for every $\varphi$ uniformly Dini continuous in $\R^d$ with compact support,  { in short, $\varphi\in C_{c,D}(\R^d)$}. Moreover, if $f$ is Dini continuous at $x\in\R^d,$ then
$$
((\log\LL_V)f)(x)= -2\lim_{t\to 0^+}\big(u_f(x,t)+f(x)\log t\big)-f(x)h(x).
$$
\end{thm}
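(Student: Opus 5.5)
The plan is to work with the kernel representation
$$
u_f(x,t)=\int_{\R^d}P_t^V(x,y)f(y)\,dy,\qquad P_t^V(x,y)=\frac12\int_0^\infty T_u^V(x,y)\frac{e^{-t^2/(4u)}}{u}\,du,
$$
and reduce everything to estimates on $T_u^V(x,y)$. We will use the Gaussian bound with the extra decay coming from the critical radius,
$$
0\le T_u^V(x,y)\le C_N u^{-d/2}e^{-c|x-y|^2/u}\Bigl(1+\tfrac{\sqrt u}{\rho(x)}\Bigr)^{-N},
$$
together with the Hölder regularity in $x$ and the comparison estimate for $|T_u^V(x,y)-T_u(x-y)|$ used in \cite{BDFQ}. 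These give
$$
P_t^V(x,y)\lesssim (t^2+|x-y|^2)^{-d/2}\min\Bigl\{1,\Bigl(\tfrac{\rho(x)}{|x-y|+t}\Bigr)^{N}\Bigr\}.
$$
Since $f\in L^1_0$, this bound shows that $u_f$ is well defined and has algebraic growth. We obtain $C^1$ (indeed $C^\infty$ in $t$) by differentiating under the integral: $\partial_t$ produces a factor $t/(2u)$, which is harmless. For $x$-derivatives we will rely on the regularity of $T_u^V$ in $x$. Part (i) follows by dominated convergence, because $e^{-t^2/4u}\to0$ and the kernel bound provides an integrable majorant (using $d\ge3$ and $f\in L^1_0$).

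For (ii) we use the semigroup structure. Writing $\phi_t(u)=e^{-t^2/(4u)}/u$, a direct computation gives
$$
\bigl(\partial_t^2+\tfrac1t\partial_t\bigr)\phi_t(u)=\partial_u\phi_t(u).
$$
On the other hand $-\LL_V T_u^Vf=\partial_u T_u^V f$. Integrating by parts in $u$ then gives
$$
\bigl(\partial_t^2+\tfrac1t\partial_t-\LL_V\bigr)u_f=\tfrac12\bigl[T_u^Vf\,\phi_t(u)\bigr]_0^\infty=0,
$$
since $\phi_t(u)\to0$ at both endpoints for fixed $t>0$. For $f$ that is only Lipschitz, $\LL_V$ acting on $u_f$ should be understood pointwise through the kernel, and we justify the calculation by the same dominated bounds.

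For (iii) we compute
$$
t\partial_t u_f=-\frac14\int_0^\infty T_u^Vf\,\frac{t^2e^{-t^2/4u}}{u^2}\,du=-\int_0^\infty T^V_{t^2/(4v)}f\,e^{-v}\,dv
$$
after the substitution $v=t^2/4u$. As $t\to0$ we have $T_s^Vf\to f$ in $L^1_{\rm loc}$, and $|T_s^Vf|$ is locally uniformly controlled (a contraction in $L^\infty_{\rm loc}$-type bounds for $f\in L^1_0\cap{\rm Lip}^\theta$), so the limit is $-f$. Part (iv) will follow from (iii) and (i) via
$$
u_f(x,t)=-\int_t^\infty \partial_s u_f(x,s)\,ds,
$$
after splitting the integral at $s=1$.

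For (v) we write
$$
u_f(x,t)+f(x)\log t=\frac12\int_0^\infty\bigl(T_u^Vf(x)-f(x)\chi_{(0,1)}(u)\bigr)\frac{e^{-t^2/4u}}{u}\,du+f(x)c(t),
$$
where $c(t)=\log t-\tfrac12\int_0^1e^{-t^2/4u}u^{-1}du=-\tfrac{\gamma}{2}+\log 2\cdot(\ldots)+o(1)$ is an explicit constant computed via exponential integrals. As $t\to0$ the integral tends to
$$
\frac12\int_0^\infty\bigl(T_u^Vf(x)-f(x)\chi_{(0,1)}(u)\bigr)\frac{du}{u}.
$$
We then split this into the pieces over $B(x,1)$ and $\R^d\setminus B(x,1)$ exactly as in Theorem \ref{ThB}, and compare it with $-\tfrac12$ of the pointwise formula for $\log\LL_V f$. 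The free heat kernel pieces over $B(x,1)$ produce the constants: the integral $\int_{B(0,1)}\int_0^\infty (T_u(z)-\chi_{(0,1)}(u)\ldots)$ yields $\alpha_d$, and the truncation at $|x-y|=1$ in the Poisson-type kernel $(t^2+|z|^2)^{-d/2}$ yields $\beta_d$. Together with the $T^V-T$ difference over $B(x,1)$, these account for $h$.

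We expect this bookkeeping of constants, together with the justification of the exchange of the limit $t\to0$ with the integrals, to be the main obstacle. The pointwise statement at Dini points is where the interchange is delicate: near the diagonal we will need
$$
\int_{B(x,1)}|f(y)-f(x)|\,|x-y|^{-d}\,dy<\infty,
$$
which is exactly the Dini condition. This gives a majorant $|f(y)-f(x)|\sup_t P^V_t(x,y)$ that is uniform in $t$, after which dominated convergence applies. The distributional version for $\varphi\in C_{c,D}$ then follows by duality: by self-adjointness and symmetry of the kernel we have
$$
\int u_f\varphi=\int f\,u_\varphi,
$$
the pointwise result applies to $\varphi$ at every point uniformly, and uniform Dini continuity gives the bounds needed to pass the limit through $\int f\cdot$ using $f\in L^1_0$.
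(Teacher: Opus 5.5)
Your proposal follows the paper for well-posedness, (i), (ii) and the duality step in (v), but (iii)--(v) take different routes.

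\textbf{Comparison of routes.} In (iii) the substitution $v=t^2/(4u)$ gives $t\partial_t u_f=-\int_0^\infty T^V_{t^2/(4v)}f\,e^{-v}\,dv$. This reduces everything to $T^V_sf\to f$ in $L^1_{\rm loc}$, together with the uniform local bound supplied by $T^V_s|f|\le T_s|f|$. The paper instead writes $T^V=T+(T^V-T)$ inside the $u$-integral, quotes the Euclidean case from \cite{ChHW}, and estimates the difference with \eqref{F3} and the local comparability of $\rho$. Your version is shorter, although $T^V_sf\to f$ still needs \eqref{F3} to handle $T^V_s1\neq 1$. Your (iv) is the standard argument behind the lemma of \cite{ChHW} that the paper quotes. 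In (v) the paper splits in $y$ into $B(x,t)$, $B(x,1)\setminus B(x,t)$ and $\R^d\setminus B(x,1)$, and extracts $\alpha_d,\beta_d$ from the kernel $\pi^{-d/2}\Gamma(d/2)(t^2+|z|^2)^{-d/2}$. You instead subtract $f(x)\chi_{(0,1)}(u)$ and let $t\to0$ inside the $u$-integral. That route is sound and cleaner, but as written the identification of $h$ is missing.

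\textbf{The gap in (v).} Three things need fixing.
\begin{itemize}
\item The sign of $c(t)$ is wrong: it should be $c(t)=\log t+\frac12\int_0^1e^{-t^2/(4u)}\frac{du}{u}$, which tends to $\log 2-\frac{\gamma}{2}$. With your minus sign it diverges.
\item Dominated convergence in $u$ needs, besides the Dini condition, $\int_0^1|1-T^V_u1(x)|\frac{du}{u}<\infty$. This comes from \eqref{F3}.
\item Your account of $\alpha_d,\beta_d$ belongs to the paper's decomposition, not yours. After $t\to0$ there is no ball $B(x,t)$ and no kernel $(t^2+|z|^2)^{-d/2}$ left.
\end{itemize}
What your route actually needs is the following. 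Put $a(u)=\int_{B(x,1)}T^V_u(x,y)\,dy$ and $G(x)=\int_0^\infty(a(u)-\chi_{(0,1)}(u))\frac{du}{u}$. The definition of $K$ rearranges to
$K(x)=2\log\rho(x)+\int_0^{\rho(x)^2}\frac{a(u)-1}{u}du+\int_{\rho(x)^2}^\infty\frac{a(u)}{u}du+\gamma$,
so $K(x)-G(x)=\gamma$. Hence the right side of \eqref{eq1.6} equals $-\gamma f(x)-\int_0^\infty(T^V_uf(x)-\chi_{(0,1)}(u)f(x))\frac{du}{u}$, and your computation gives the constant $h\equiv 2\gamma-2\log 2$. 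This is consistent with $u_f=K_0(t\sqrt{\LL_V})f$, where $K_0$ is the modified Bessel function.

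To reach the displayed expression for $h$ you must still show two identities:
\begin{itemize}
\item $K(x)-\int_{B(x,1)}\int_0^\infty\frac{T^V_u(x,y)-T_u(x-y)}{u}\,du\,dy=\gamma+\int_0^\infty\bigl(\int_{B(0,1)}T_u(z)\,dz-\chi_{(0,1)}(u)\bigr)\frac{du}{u}=\gamma-2\log2-\psi(d/2)$, with $\psi$ the digamma function;
\item $\alpha_d+\beta_d=-\gamma-\psi(d/2)$.
\end{itemize}
The second identity holds when $\beta_d$ carries the exponent $-d/2$, as in the proof, and $\alpha_d$ is the true limit constant of the $B(x,t)$-piece. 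That constant is $2\int_0^1(1+s^2)^{-d/2}s^{d-1}ds=\int_0^1(1+\tau)^{-d/2}\tau^{d/2-1}d\tau$, which is half the printed value. Check that normalization before trying to match constants.

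\textbf{The gap in (ii) and in $u_f\in C^1$.} Gaussian, H\"older and $T^V-T$ comparison bounds do not let you differentiate $\int f(y)P^V_t(x,y)\,dy$ in $x$ even once. The paper uses the gradient bounds \eqref{2.3}--\eqref{2.4} and the second-order bounds \eqref{2.6}--\eqref{2.7} to move $\LL_V$ under the integral. ``The same dominated bounds'' do not supply this. Without such estimates, or a weak formulation followed by a regularity argument, you only obtain the equation in (ii) in the distributional sense.
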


Observe that from Theorem \ref{Th1.1} we get a solution for an extension problem associated with the operator $\log(-\Delta+m^2)$.

\vspace{0.4cm}

On the other hand, the study of analytic and  geometric properties of graphs has been an active research area (see \cite{BHLLMY,BHY, ChWY,ChuY, Chu,GLY, ZLY}). By taking as a starting point the following definition through a Bochner integral
$$
\log(-\Delta)f=\int_0^\infty \frac{e^{-t}f-e^{t\Delta}f}{t}dt
$$
that makes sense on any weighted graph provided that $f$ satisfies certain mild growth conditions,  Chen and Xu (see \cite[Theorem 1.1]{ChX}) have obtained a pointwise representation for the logarithmic Laplacian on stochastically complete weighted graphs. The analysis of the logarithmic Laplacian on graphs requires a precise control of the heat kernel under additional structural assumptions on the graph. As can be seen in the proof of \cite[Theorem 1.2]{ChHW} (and also in our proof of Theorem \ref{Th1.1}),  considering an extension problem related with the logarithmic Laplacian on general graphs is a more cumbersome question than the one for the fractional Laplacian, $(-\Delta)^s,$ $0<s<1.$

In the sequel, we shall focus on the toy case of the unweighted graph $\Z$. This is the first step to consider the problem for general weighted graphs. Harmonic analysis operators in the discrete case were studied in \cite{CGRTV} and \cite{CRSTV}. An extension problem for fractional powers $(-\Delta_d)^s$, $0<s<1$, of the discrete Laplacian defined by $\Delta_df(n)=f(n+1)-2f(n)+f(n-1)$, $n\in\Z$, can be found in \cite[Remark 1.4]{CRSTV}. As a special case of \cite[Theorem 1.1]{ChX} for $\Z$ as unweighted graph, we can  {get a pointwise representation of the logarithmic discrete Laplacian, as follows in the next result.} We shall denote by $C_c(\Z)$ the space of sequences $\{f(n)\}_{n\in\Z}$ such that the set $\{n\in\Z:\: f(n)\neq 0\}$ is finite.
 \begin{ThA}\label{ThC}
 Let $f\in C_c(\Z)$. We have that
 \begin{align}\label{eq1.8}
( \log (-\Delta_d)f)(n)&=\sum_{\substack{m\in\Z\\ m\neq n}}W_0(n-m)(f(n)-f(m))\nonumber\\
&\quad -\sum_{m\in\Z}W_\infty(n-m)f(m)-\gamma f(n), \quad n\in\Z,
 \end{align}
 where $\gamma$ denotes the Euler-Mascheroni constant, and the kernels $W_0$ and $W_\infty$ are given by
 $$
 W_0(m)=\displaystyle\int_0^1\frac{p_t(m)}{t}dt,\quad  W_\infty(m)=\displaystyle\int_1^\infty\frac{p_t(m)}{t}dt, \quad m\in\Z.
 $$
 Here, $p_t(m),$ $m\in\Z$, $t>0$, denotes the heat kernel associated with $-\Delta_d,$ given by $p_t(m)=e^{-2t}I_{|m|}(2t)$, $m\in\Z$ and $t>0$, being $I_\nu $  the modified Bessel function of first kind and order $\nu.$
\end{ThA}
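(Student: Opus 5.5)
Starting from the Bochner-integral definition
$$
\log(-\Delta_d)f=\int_0^\infty \frac{e^{-t}f-e^{t\Delta_d}f}{t}\,dt,
$$
the plan is to split the integral at $t=1$, where each piece can be handled directly. The heat semigroup on $\Z$ acts by convolution, $e^{t\Delta_d}f(n)=\sum_{m\in\Z}p_t(n-m)f(m)$, with $p_t(m)=e^{-2t}I_{|m|}(2t)$. This follows by Fourier series: the symbol of $-\Delta_d$ is $4\sin^2(\theta/2)=2-2\cos\theta$, and the generating function $e^{z\cos\theta}=\sum_k I_k(z)e^{ik\theta}$ gives the kernel. Two facts are needed. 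First, $\sum_m p_t(m)=e^{-2t}\sum_k I_k(2t)=1$, which is stochastic completeness. Second, $p_t(0)=1-2t+O(t^2)$ and $p_t(m)=O(t^{|m|})$ as $t\to0$, together with $p_t(m)\le Ct^{-1/2}$ as $t\to\infty$.

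For the piece on $(0,1)$, I write
$$
e^{-t}f(n)-e^{t\Delta_d}f(n)=(e^{-t}-1)f(n)+\sum_{m\neq n}p_t(n-m)\bigl(f(n)-f(m)\bigr),
$$
using $\sum_m p_t(n-m)=1$. Dividing by $t$ and integrating over $(0,1)$ gives
$$
f(n)\int_0^1\frac{e^{-t}-1}{t}\,dt+\sum_{m\neq n}W_0(n-m)\bigl(f(n)-f(m)\bigr).
$$
The interchange of sum and integral is justified because $f\in C_c(\Z)$ makes the sum finite for fixed $n$, and $p_t(m)/t$ is integrable near $0$ for $m\neq0$ since $p_t(m)=O(t^{|m|})$.

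For the piece on $(1,\infty)$, the two terms can be separated because each is integrable there: $\int_1^\infty e^{-t}/t\,dt<\infty$, and $\sum_m p_t(n-m)f(m)/t=O(t^{-3/2})$ since the sum is finite and $p_t\le Ct^{-1/2}$. This yields
$$
f(n)\int_1^\infty\frac{e^{-t}}{t}\,dt-\sum_{m}W_\infty(n-m)f(m).
$$

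The constant is then identified by the classical identity
$$
\int_0^1\frac{e^{-t}-1}{t}\,dt+\int_1^\infty\frac{e^{-t}}{t}\,dt=-\gamma.
$$
Adding the two pieces gives \eqref{eq1.8}.

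The main obstacle is the justification of the kernel estimates, especially the uniform large-time bound $p_t(m)\le Ct^{-1/2}$. I would obtain it from $p_t(m)\le p_t(0)=\frac1{2\pi}\int_{-\pi}^{\pi}e^{-2t(1-\cos\theta)}\,d\theta$ together with a Laplace-type estimate. The other point to check is that the Bochner integral defining $\log(-\Delta_d)f$ converges at $t=0$. This holds because $e^{-t}f-e^{t\Delta_d}f=O(t)$ pointwise, or one can simply cite \cite[Theorem 1.1]{ChX} specialised to $\Z$.
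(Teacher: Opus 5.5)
Your argument is correct in substance, and it takes a different route from the paper because the paper does not prove this statement. It quotes \eqref{eq1.8} as the unweighted $\Z$ case of Chen and Xu's pointwise representation \cite[Theorem 1.1]{ChX} for stochastically complete weighted graphs. You instead derive it directly on $\Z$, in these steps:
\begin{enumerate}
\item[(a)] you identify the kernel through Fourier series and the generating function of $I_k$;
\item[(b)] you use stochastic completeness, $\sum_k p_t(k)=1$, to write the small-time integrand as $(e^{-t}-1)f(n)+\sum_{m\neq n}p_t(n-m)(f(n)-f(m))$;
\item[(c)] you split the integral at $t=1$;
\item[(d)] you obtain the constant from $\int_0^1\frac{e^{-t}-1}{t}\,dt+\int_1^\infty\frac{e^{-t}}{t}\,dt=-\gamma$.
\end{enumerate}
All of this checks out, signs included. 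In effect it is the Chen--Xu argument written out for $\Z$. It is elementary, and it makes clear that stochastic completeness is the only structural input and that $-\gamma$ comes from the $e^{-t}$ term alone. The citation instead buys generality: arbitrary stochastically complete graphs, and $f$ with mild growth rather than $f\in C_c(\Z)$.

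One justification is wrong, although the step it supports is true. On $(0,1)$ you say the interchange of sum and integral is fine because ``$f\in C_c(\Z)$ makes the sum finite''. It does not. If $f(n)\neq0$, every $m\notin\operatorname{supp}f$ contributes the nonzero term $p_t(n-m)f(n)$, so $\sum_{m\neq n}p_t(n-m)(f(n)-f(m))$ is an infinite series. Moreover, the termwise fact $p_t(m)=O(t^{|m|})$ is not uniform in $m$, so it cannot control that series. What you actually need is $\sum_{k\neq0}W_0(k)<\infty$; this is also what makes the first series in \eqref{eq1.8} absolutely convergent. It follows from positivity, Tonelli and stochastic completeness:
$$\sum_{k\neq0}W_0(k)=\int_0^1\frac{1-p_t(0)}{t}\,dt\le\int_0^1\frac{1-e^{-2t}}{t}\,dt\le2,$$
since $p_t(0)=e^{-2t}I_0(2t)\ge e^{-2t}$. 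Alternatively, use the uniform bound $p_t(k)\le t^{|k|}/|k|!$ from \eqref{3.2}. Hence $\int_0^1\sum_{m\neq n}t^{-1}p_t(n-m)|f(n)-f(m)|\,dt\le4\|f\|_\infty$, and Fubini justifies the interchange. With this fix the proof is complete.
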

\noindent The right hand side of \eqref{eq1.8} is also defined by sequences $\{f(m)\}_{m\in\Z}$ such that $\sum_{m\in\Z}\frac{|f(m)|}{\sqrt{1+|m|}}<\infty$, so we can extend the definition of $\log(-\Delta_d)$ to sequences with this property.
Now we establish our result about the extension problem concerning to $\log(-\Delta_d).$

\begin{thm}[Extension problem for the logarithmic discrete Laplacian operator]\label{Th1.2}
Let $\{f(m)\}_{m\in\Z}$ be a sequence such that  $\sum_{m\in\Z}\frac{|f(m)|}{\sqrt{1+|m|}}<\infty$. We define 
$$
u_f(n,t)=\int_0^\infty p_u(f)(n)\frac{e^{-\frac{t^2}{4u}}}{u}du, \quad n\in\Z \text{ and } t>0,
$$
where
$p_u(f)(n)=\sum_{m\in\Z}p_u(n-m)f(m),$ $n\in\Z$ and $u>0$.

Then, the following properties hold:
\begin{itemize}
    \item[(i)] $\displaystyle\lim_{t\to\infty} u_f(n,t)=0$,  $n\in \Z.$\\
      \item[(ii)] $\big(\partial^2_t+\frac{1}{t}\partial_t+\Delta_d\big) u_f(n,t)=0$, $n\in \Z$ and $t>0.$ \\      
      \item[(iii)] $\displaystyle\lim_{t\to 0^+}t\partial_t u_f(n,t)=- {2}f(n)$, $n\in\Z$.\\ 
      \item[(iv)]$\displaystyle\lim_{t\to 0^+}\frac{u_f(n,t)}{\log t}=-2f(n)$, $n\in\Z.$\\
      \item[(v)]$(\log(-\Delta_d)f)(n)=-\displaystyle\lim_{t\to 0^+} (u_f(n,t)+2 {f(n)}\log t) {+}f(n)K$, $n\in\Z,\,$ where 
            $$
            K=\displaystyle -\gamma \;+\;\int_{1/4}^\infty \frac{e^{-v}}{v}dv-\int_0^{1/4}\frac{1-e^{-v}}{v}dv.
            $$
            \end{itemize}
\end{thm}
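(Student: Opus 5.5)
Our plan is to work directly with the explicit heat kernel $p_u(m)=e^{-2u}I_{|m|}(2u)$ and its standard properties. Namely, $p_u(m)\ge 0$, $\sum_m p_u(m)=1$, $\partial_u p_u=\Delta_d p_u$, and the uniform bound $p_u(m)\le C\min\{1,u^{-1/2}\}$. We also use the off-diagonal bound $p_u(m)\le C e^{-cu}\,u^{|m|}/|m|!$ for $u\le 1$, which follows from the series of $I_\nu$. The hypothesis $\sum_m |f(m)|(1+|m|)^{-1/2}<\infty$ is tailored to these bounds. Together with the estimate $p_u(m)\le C u^{-1/2}$ for large $u$, and a Gaussian-type bound $p_u(m)\le C u^{-1/2}e^{-c m^2/u}$ for $|m|\le u$ (and exponential decay in $|m|$ when $|m|\ge u$), this gives
\[
|p_u(f)(n)|\le C_n\min\{1,u^{-1/2}\}\sum_m\frac{|f(m)|}{\sqrt{1+|m|}}.
\]
Hence $p_u(f)(n)$ is well defined and bounded in $u$, tends to $f(n)$ as $u\to0^+$, and decays like $u^{-1/2}$ as $u\to\infty$. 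All subsequent steps rest on this one estimate.

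\emph{Properties (i)–(iv).} With the change of variables $u=t^2v/4$ we write
\[
u_f(n,t)=\int_0^\infty p_{t^2v/4}(f)(n)\,\frac{e^{-1/v}}{v}\,dv .
\]
\begin{itemize}
\item[(i)] As $t\to\infty$, the integrand tends to $0$ pointwise, and it is dominated by $C\min\{1,(t^2v)^{-1/2}\}e^{-1/v}/v$, which is integrable for $t\ge 1$ because of the factor $v^{-1/2}$ at infinity. Dominated convergence gives (i).
\item[(ii)] We differentiate under the integral in the original variables, justified by the same bounds. A direct computation gives $(\partial_t^2+\frac1t\partial_t)\big(e^{-t^2/4u}/u\big)=\partial_u\big(e^{-t^2/4u}/u\big)$. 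Using $\Delta_d p_u(f)=\partial_u p_u(f)$ and integrating by parts in $u$, the boundary terms vanish: at $u=0$ because of the factor $e^{-t^2/4u}/u\to0$, and at $u=\infty$ because of the factor $u^{-1}$.
\item[(iii)] We compute $t\partial_t u_f(n,t)=-\int_0^\infty p_u(f)(n)\frac{t^2}{2u^2}e^{-t^2/4u}\,du$ and substitute $u=t^2v/4$. This gives $-2\int_0^\infty p_{t^2v/4}(f)(n)\,v^{-2}e^{-1/v}\,dv$, which tends to $-2f(n)$ since $\int_0^\infty v^{-2}e^{-1/v}\,dv=1$.
\item[(iv)] This follows from (iii) by l'Hôpital's rule, or from the splitting in (v).
\end{itemize}

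\emph{Property (v).} This is the main computation. We split
\[
u_f(n,t)=\int_0^1 p_u(f)(n)\frac{e^{-t^2/4u}}{u}\,du+\int_1^\infty p_u(f)(n)\frac{e^{-t^2/4u}}{u}\,du .
\]
The second piece converges, as $t\to0$, to $\sum_m W_\infty(n-m)f(m)$ by dominated convergence, using the bound $u^{-3/2}$. In the first piece we write $p_u(f)(n)=f(n)\,p_u(0)+\sum_{m\ne n}p_u(n-m)f(m)$, and for $m\neq n$ we use $p_u(n-m)\le Cu^{|n-m|}$. This gives convergence to $\sum_{m\ne n}W_0(n-m)f(m)$, with summability ensured by the factor $e^{-cu}u^{|m|}/|m|!$. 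It remains to treat $f(n)\int_0^1 p_u(0)e^{-t^2/4u}u^{-1}\,du$. Writing $p_u(0)=1-\sum_{m\neq0}p_u(m)$, the sum part converges to $-\sum_{m\ne0}W_0(m)$. The other part is
\[
\int_0^1 e^{-t^2/4u}\,\frac{du}{u}=\int_{t^2/4}^\infty \frac{e^{-v}}{v}\,dv = -2\log t+\log 4-\gamma+o(1),
\]
with the constants made precise through the identity $\int_{a}^\infty\frac{e^{-v}}{v}\,dv=-\gamma-\log a+\int_0^a\frac{1-e^{-v}}{v}\,dv$, evaluated at $a=1/4$ versus $a=t^2/4$. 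Adding $2f(n)\log t$ and comparing the result with \eqref{eq1.8} of Theorem \ref{ThC}, extended to sequences in the weighted class, yields the constant $K$.

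\emph{Main obstacle.} The step we expect to be hardest is the uniform heat kernel estimate that lets us pass to the limit inside the sums for general $f$ in the weighted $\ell^1$ class rather than for $f\in C_c(\Z)$. This requires controlling $\sum_m p_u(n-m)|f(m)|$ both for small $u$ (uniformly in $m$) and for large $u$. We would first try the Bessel bound $I_\nu(x)\le \frac{(x/2)^\nu}{\nu!}e^{x}$ for small $u$ and the bound $p_u(m)\le p_u(0)\le Cu^{-1/2}$ for large $u$. We would also need to check carefully that the constant bookkeeping in (v) reproduces exactly the stated $K$.
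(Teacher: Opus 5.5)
Your architecture matches the paper's, but the estimate you say everything rests on is false for large $u$. One cannot have $|p_u(f)(n)|\le C_n u^{-1/2}\sum_m |f(m)|(1+|m|)^{-1/2}$ for $u\ge 1$.

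In the diffusive regime $p_u(k)\approx (4\pi u)^{-1/2}e^{-k^2/4u}$. So for $f=\sqrt{1+m_0}\,\delta_{m_0}$ (weighted norm $1$) and $u=m_0^2$ you get $p_u(f)(0)=\sqrt{1+m_0}\,p_{m_0^2}(m_0)\approx c\,m_0^{-1/2}=c\,u^{-1/4}$. The rate $u^{-1/2}$ fails even for a fixed $f$ in the class: $f(m)=(1+|m|)^{-a}$ with $\frac12<a<1$ gives $p_u(f)(0)\gtrsim u^{-1/2}\sum_{|m|\le\sqrt u}(1+|m|)^{-a}\approx u^{-a/2}$.

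The tool you propose for this step, $p_u(m)\le p_u(0)\le Cu^{-1/2}$, only yields $Cu^{-1/2}\sum_m|f(m)|$. That requires $f\in\ell^1(\Z)$, which the hypothesis does not give. What is true, and sharp, is $p_u(k)\le C u^{-1/4}(1+|k|)^{-1/2}$ for $u\ge 1$. This is exactly the bound \eqref{3.4} the paper uses. It also follows from your Gaussian bound by maximizing $\sqrt{1+|k|}\,u^{-1/2}e^{-ck^2/u}$ over $k$: the maximum, attained at $|k|\sim\sqrt u$, is of order $u^{-1/4}$. Together with $\sqrt{1+|m|}\le\sqrt{1+|n|}\sqrt{1+|n-m|}$ this gives $|p_u(f)(n)|\le C\sqrt{1+|n|}\min\{1,u^{-1/4}\}\sum_m|f(m)|(1+|m|)^{-1/2}$.

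This is the only defect, and the repair is local. Every place where you used the decay only needs $u^{-1}$ times the decay to be integrable at infinity. Mere boundedness would not do, since $\int^\infty du/u$ diverges, both in the definition of $u_f$ and in the integral $\int_0^\infty \Delta_d p_u(f)(n)e^{-t^2/4u}u^{-1}\,du$ left after integrating by parts in (ii).
\begin{itemize}
\item In (i) the dominating function becomes $C\min\{1,(t^2v)^{-1/4}\}e^{-1/v}/v$, still integrable for $t\ge1$.
\item In (v) the tail piece is dominated by $Cu^{-5/4}$. The Fubini step identifying its limit with $\sum_m W_\infty(n-m)f(m)$ uses $\int_1^\infty p_u(k)\,du/u\le C(1+|k|)^{-1/2}$.
\end{itemize}

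With that correction, the rest agrees with the paper. Your treatment of the $\int_0^1$ piece via $p_u(0)=1-\sum_{k\ne0}p_u(k)$ is the paper's $S_2/S_3$ split through $\sum_kp_u(k)=1$. Your constant $\log4-\gamma$ coincides with $\int_{1/4}^\infty e^{-v}v^{-1}dv-\int_0^{1/4}(1-e^{-v})v^{-1}dv$, so $K=\log 4-2\gamma$. Your proof of (iii) uses a single substitution $u=t^2v/4$, needing only boundedness of $p_u(f)(n)$ and $p_u(f)(n)\to f(n)$; the paper instead splits into diagonal and off-diagonal parts. Your proof of (iv) uses l'H\^opital from (iii), valid since $|\log t|\to\infty$. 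Both differ slightly from the paper's route but are correct.
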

Theorems \ref{Th1.1} and \ref{Th1.2} are proved in Sections \ref{sec2} and \ref{sec3}, respectively. Throughout this paper, $C$ and $c$ will always denote positive constants that can change in each
occurrence.

\section{The Schrödinger setting}\label{sec2}
 
 Along this section we shall consider the Schrödinger operator on $\R^d$, $d\ge 3$,  given by $\LL_V=-\Delta+V,$  where $V$ is a nonnegative potential satisfying a reverse Hölder inequality, $V\in RH_q,$ with $q>d/2$, that is,  for every ball $B\subset\R^d$ it holds that
$$
\left( \frac{1}{|B|}\int_B V(y)^q dy\right)^{1/q}\le \frac{C}{|B|}\int_B V(y)dy.$$

In this setting, the following function, so-called critical radius, plays a crucial role
$$
\rho(x)=\sup\left\{r>0:\:\frac{1}{r^{d-2}}\int_{B(x,r)}V(y)dy\le 1\right\}.
$$
From the comments in the previous section, the semigroup generated by $-\LL_V$ has the pointwise representation
\begin{equation}
    T_t^V(f)(x)=\int_{\R^d}T_t^V(x,y)f(y)dy, \quad x\in\R^d,\text{ for } f\in  L^p(\R^d), \: 1\le p\le \infty.
\end{equation}
We now recall some estimates involving the integral kernel $T_t^V(x,y)$, $x,y\in\R^d$ and $t>0$, that will be useful in the sequel.

Let us  denote by $T_t(z)$, $z\in\R^d$ and $t>0$, the Euclidean heat kernel in $\R^d$, that is, 
$$
T_t(z)=\frac{e^{-\frac{|z|^2}{4t}}}{(4\pi t)^{d/2}}, \quad z\in\R^d, \:t>0.
$$
Since  $V\ge 0$, the Feynman-Kac formula leads to
\begin{align}\label{F1}
    0\le T_t^V(x,y)\le T_t(x-y), \quad x,y\in\R^d \quad \text{and } t>0.
\end{align}
Moreover, since $V\in RH_q,$ with $q>d/2$,
according to \cite[Proposition 2.4]{DGMTZ}, for every $N\in\N$ there exist $C,c>0$ such that

\begin{equation}\label{F2}
|T_t^V (x,y)|+|t\partial _tT_t^V (x,y)|\leq C\frac{e^{-c\frac{|x-y|^2}{t}}}{t^{\frac{d}{2}}}\Big(1+\frac{\sqrt{t}}{\rho (x)}+\frac{\sqrt{t}}{\rho (y)}\Big)^{-N},\quad x,y\in \mathbb R^d \mbox{ and }t>0.
\end{equation}
In addition, the following Lipschitz regularity for the Schrödinger heat kernel holds (see \cite[Proposition 4.11]{DzZ2}): there exists $C>0$ such that
\begin{equation}\label{F3}
    |T_t^V(x,y)-T_t(x-y)|\le  {C}\left\{\begin{array}{cc}
        \left(\frac{\sqrt{t}}{\rho(x)} \right)^\delta \varphi_t(x-y), & \sqrt{t}\le \rho(x)  \\
       \varphi_t(x-y),  & \sqrt{t}>\rho(x), 
    \end{array} \right. \qquad x,y\in\R^d,
\end{equation}
where $\delta {=2-d/q}>0$ and $\varphi$ is a function in the Schwartz class $\mathcal{S}(\R^d),$  so that $\varphi_t(z)=\frac{1}{t^{d/2}}\varphi\left( \frac{z}{\sqrt{t}}\right)$, $z\in \R^d$ and $t>0.$

\subsection{Proof of Theorem \ref{Th1.1}}

Suppose that $f\in L^1_0(\mathbb R^d)$. We consider
$$
u_f(x,t)=\frac{1}{2}\int_0^\infty T_u^V (f)(x)\frac{e^{-\frac{t^2}{4u}}}{u}du,\quad x\in \mathbb R^d\mbox{ and }t>0.
$$
According to \eqref{F1} we have that
\begin{align}\label{bound}
|u_f(x,t)|&\le C\int_{\mathbb R^d}\int_0^\infty |f(y)|T_u^V (x,y)\frac{e^{-\frac{t^2}{4u}}}{u}dudy\leq C\int_{\mathbb R^d}|f(y)|\int_0^\infty \frac{e^{-\frac{t^2+|x-y|^2}{4u}}}{u^{\frac{d}{2}+1}}dudy\nonumber\\
&\leq C\int_{\mathbb R^d}\frac{|f(y)|}{(t^2+|x-y|^2)^{\frac{d}{2}}}dy\leq C\left(\int_{\mathbb |y|\leq 2|x|}+\int_{|y|> 2|x|}\right)\frac{|f(y)|}{(t+|x-y|)^d}dy\nonumber\\
&\leq C\left(\frac{(1+|x|)^d}{t^d}\int_{\mathbb R^d}\frac{|f(y)|}{(1+|y|)^d}dy+\int_{\mathbb R^d}\frac{|f(y)|}{(t+|y|)^d}dy\right)\nonumber\\
&\leq C\left(\frac{(1+|x|)^d}{t^d}+\frac{1}{\min\{1,t^d\}}\right)\int_{\mathbb R^d}\frac{|f(y)|}{(1+|y|)^d}dy<\infty,\quad x\in \mathbb R^d\mbox{ and }t>0.
\end{align}
Thus we can write
$$
u_f(x,t)=\frac{1}{2}\int_{\mathbb R^d}f(y)\int_0^\infty T_u^V (x,y)\frac{e^{-\frac{t^2}{4u}}}{u}dudy,\quad x\in \mathbb R^d\mbox{ and }t>0.
$$
Since 
$$
|u_f(x,t)|\leq C\int_{\mathbb R^d}\frac{|f(y)|}{(t^2+|x-y|^2)^\frac{d}{2}}dy,\quad x\in \mathbb R^d\mbox{ and }t>0,
$$
by proceeding as in the proof of \cite[Lemma 3.2]{ChHW} we get that $u_f(\cdot ,t)\in {L^1_\sigma (\mathbb R^d)}$, for every $t>0$ and $\sigma >0$. Furthermore, for every $\sigma >0$ there exists $C_\sigma>0$ such that
\begin{equation}\label{2.1}
\int_{\mathbb R^d}\frac{|u_f(x,t)|}{(1+|x|)^{d+\sigma}}dx\leq C_\sigma (1+\textcolor{black}{\log^-t)},\quad t>0.
\end{equation}
Here $\textcolor{black}{\log^-t}=\max\{-\textcolor{black}{\log t},0\}$, $t>0$. From \eqref{2.1} with $\sigma =1$ and the arguments in the proof of \cite[Theorem 3.1, (i)$\Longrightarrow $(ii)]{ChHW} we deduce that $u_f\in \mathcal {AG}(\mathbb R_+^{d+1})$.\\

\noindent{\bf Proof of $(i)$}. From the previous estimates we have that
$$
|u_f(x,t)|\leq C\int_{\mathbb R^d}\frac{|f(y)|}{(1+|x-y|)^d}dy\leq C(1+|x|)^d\int_{\mathbb R^d}\frac{|f(y)|}{(1+|y|)^d}dy,\quad x\in \mathbb R^d\mbox{ and }t>1.
$$
By using dominated convergence theorem we get that 
$$
\lim_{t\rightarrow \infty}u_f(x,t)=0,\quad x\in \mathbb R^d.
$$

\noindent{\bf Proof of $(ii)$}. Let $(x_0,t_0)\in \mathbb R_+^{d+1}$. According to \eqref{F1} and \textcolor{black}{by proceeding as in \eqref{bound}} we have that
\begin{align*}
 \int_{\mathbb R^d}\int_0^\infty |f(y)|\frac{e^{-\frac{|x-y|^2}{4u}}}{u^{\frac{d}{2}+1}}\Big|\textcolor{black}{\partial _t^2e^{-\frac{t^2}{4u}}+\frac{1}{t}\partial _te^{-\frac{t^2}{4u}}}\Big|dudy&\\
&\hspace{-5cm}\leq C\int_{\mathbb R^d}|f(y)|\int_0^\infty\frac{e^{-\frac{|x-y|^2+t^2}{4u}}}{u^{\frac{d}{2}+1}}\Big(\textcolor{black}{\frac{t^2}{u^2}+\frac{1}{u}}\Big)dudy\\
&\hspace{-5cm}\textcolor{black}{\leq C\int_{\mathbb R^d}|f(y)|\int_0^\infty\frac{e^{-\frac{|x-y|^2+t^2}{8u}}}{u^{\frac{d}{2}+2}}dudy=C\int_{\mathbb R^d}\frac{|f(y)|}{(|x-y|^2+t^2)^{\frac{d}{2}{+1}}}dy}\\
&\hspace{-5cm} {\le C\left(\frac{(1+|x|)^{d+2}}{t^{d+2}}+\frac{1}{\min\{1,t^{d+2}\}}\right)\int_{\mathbb R^d}\frac{|f(y)|}{(1+|y|)^{d+2}}dy}\\
&\hspace{-5cm}\textcolor{black}{\leq C\int_{\mathbb R^d}\frac{|f(y)|}{(1+|y|)^{d+2}}dy},\quad (x,t)\in B\Big((x_0,t_0),\frac{t_0}{2}\Big).
\end{align*}
\textcolor{black}{Here $C=C(x_0,t_0)$}. These estimates allow us to write
\begin{align*}
\textcolor{black}{\partial ^2_ tu_f(x,t)+\frac{1}{t}\partial _tu_f(x,t)}&= {\frac{1}{2}}\int_{\mathbb R^d}f(y)\int_0^\infty \frac{T_u^V (x,y)}{u }\left(\partial ^2_t+\frac{1}{t}\partial _t\right)e^{-\frac{t^2}{4u}}dudy\\
&= {\frac{1}{2}}\int_{\mathbb R^d}f(y)\int_0^\infty \frac{T_u^V (x,y)}{u}\Big(\frac{t^2}{4u^2}-\frac{1}{u}\Big)e^{-\frac{t^2}{4u}}dudy\\
&=\textcolor{black}{ {\frac{1}{2}}\int_{\mathbb R^d}f(y)\int_0^\infty T_u^V (x,y)\partial _u\left(\frac{1}{u}e^{-\frac{t^2}{4u}}\right)dudy},\quad (x,t)\in \mathbb R_+^{d+1}.
\end{align*}
Partial integration leads to 
$$
\int_0^\infty T_u^V (x,y)\partial _u\Big(\frac{e^{-\frac{t^2}{4u}}}{u}\Big)du=T_u^V (x,y)\frac{e^{-\frac{t^2}{4u}}}{u}\Bigg]_{u\rightarrow 0^+}^{u\rightarrow +\infty} -\int_0^\infty \partial _u(T_u^V (x,y))\frac{e^{-\frac{t^2}{4u}}}{u}du,\quad x,y\in \mathbb R^d\mbox{ and }t>0.
$$

According to \eqref{F1} we get that
$$
0\leq T_u^V (x,y)\frac{e^{-\frac{t^2}{4u}}}{u}\leq C\frac{e^{-\frac{|x-y|^2+t^2}{4u}}}{u^{\frac{d}{2}+1}},\quad x,y\in \mathbb R^d\mbox{ and }t>0,
$$
so
$$
\lim_{u\rightarrow 0^+}T_u^V (x,y)\frac{e^{-\frac{t^2}{4u}}}{u}=\lim_{u\rightarrow +\infty}T_u^V (x,y)\frac{e^{-\frac{t^2}{4u}}}{u}=0,\quad x,y\in \mathbb R^d\mbox{ and }t>0.
$$
Therefore,
$$
\textcolor{black}{\int_0^\infty T_u^V (x,y)\partial _u\Big(\frac{e^{-\frac{t^2}{4u}}}{u}\Big)du=-\int_0^\infty \partial _u(T_u^V (x,y))\frac{e^{-\frac{t^2}{4u}}}{u}du},\quad x,y\in \mathbb R^d\mbox{ and }t>0. 
$$
Since \textcolor{black}{$\partial _u T_u^V (x,y)=-\mathcal L_{V}T_u^V (x,y)$}, $x,y\in \mathbb R^d$ and $u>0$, we obtain that
$$
\textcolor{black}{\partial ^2_ tu_f(x,t)+\frac{1}{t}\partial _tu_f(x,t)= {\frac{1}{2}}\int_{\mathbb R^d}f(y)\int_0^\infty \mathcal L_{V}(T_u^V (x,y))\frac{e^{-\frac{t^2}{4u}}}{u}dudy},\quad x\in \mathbb R^d\mbox{ and }t>0.
$$

Our next objective is to see that, for each $x\in \mathbb R^d$ and $t>0$,
\begin{equation}\label{2.2}
\int_{\mathbb R^d}f(y)\int_0^\infty \mathcal L_{V}(T_u^V (x,y))\frac{e^{-\frac{t^2}{4u}}}{u}dudy=\mathcal L_{V}\Big(\int_{\mathbb R^d}f(y)\int_0^\infty T_u^V (x,y)\frac{e^{-\frac{t^2}{4u}}}{u}dudy\Big).
\end{equation}
According to \cite[Lemmas 3.4 and 3.5]{LQWZ}, for every $N\in \mathbb N$ there exist $C,c>0$ such that, for each $x,y\in \mathbb R^d$ and $u>0$,
\begin{equation}\label{2.3}
|\nabla_xT_u^V (x,y)|\leq C\left\{
\begin{array}{ll}
\displaystyle \frac{e^{-c\frac{|x-y|^2}{u}}}{u^{\frac{d+1}{2}}}\left(1+\frac{\sqrt{u}}{\rho (x)}+\frac{\sqrt{u}}{\rho (y)}\right)^{-N},&\sqrt{u}\leq |x-y|,\\[0.5cm]
\displaystyle\frac{e^{-c\frac{|x-y|^2}{u}}}{u^{\frac{d}{2}}|x-y|}\left(1+\frac{\sqrt{u}}{\rho (x)}+\frac{\sqrt{u}}{\rho (y)}\right)^{-N},&\sqrt{u}>|x-y|,
\end{array}
\right.
\end{equation}
and
\begin{equation}\label{2.4}
|\nabla _xT_u^V (x,y)|\leq Cu^{-\frac{d+1}{2}}\left(1+\frac{\sqrt{u}}{\rho (x)}+\frac{\sqrt{u}}{\rho (y)}\right)^{-N}.
\end{equation}
Let $x_0\in \mathbb R^d$ and $t_0>0$. By using \eqref{2.3} and \eqref{2.4} we obtain that
\begin{align*}
\int_{\mathbb R^d}|f(y)|\int_0^\infty |\nabla _xT_u^V (x,y)|\frac{e^{-\frac{t^2}{4u}}}{u}dudy&\leq C\left(\int_{\textcolor{black}{|y|\leq 2|x|}}|f(y)|\int_0^\infty \frac{e^{-\frac{t^2}{4u}}}{u^{\frac{d+3}{2}}}dudy\right.\\
&\hspace{-5cm} \left.\quad +\int_{|y|> 2|x|} {\frac{|f(y)|}{|x-y|}}\int_{|x-y|^2}^\infty \frac{e^{-c\frac{|x-y|^2+t^2}{u}}}{u^{\frac{d}{2}+1}}dudy+\int_{|y|> 2|x|}|f(y)|\int_0^{|x-y|^2} \frac{e^{-c\frac{|x-y|^2+t^2}{u}}}{u^{\frac{d+3}{2}}}dudy\right)\\
&\hspace{-5cm}\leq C\left(\frac{1}{t^{d+1}}\int_{|y|\leq 2|x|}|f(y)|dy+\int_{|y|> 2|x|}|f(y)|\left(\frac{1}{|x-y|(t^2+|x-y|^2)^{\frac{d}{2}}}+\frac{1}{(t^2+|x-y|^2)^{\frac{d+1}{2}}}\right)dy\right)\\
&\hspace{-5cm}\leq C\left(\frac{1}{t^{d+1}}\int_{|y|\leq 2|x|}|f(y)|dy+\int_{|y|> 2|x|}|f(y)|\Big(\frac{1}{|y|(t+|y|)^d}+\frac{1}{(t+|y|)^{d+1}}\Big)dy\right)\\
&\hspace{-5cm}\leq C\textcolor{black}{\left(\frac{(1+|x|)^d}{t^{d+1}}+\frac{1}{|x|\min\{1,t^d\}}+\frac{1}{\min\{1,t^{d+1}\}}\right)}\int_{\mathbb R^d}\frac{|f(y)|}{(1+|y|)^d}dy\\
&\hspace{-5cm}\leq C\int_{\mathbb R^d}\frac{|f(y)|}{(1+|y|)^d}dy,\quad (x,t)\in B\Big((x_0,t_0),\frac{t_0}{2}\Big).
\end{align*}
These estimates imply that
$$
\int_{\mathbb R^d}\int_0^\infty \partial _{x_i}T_u^V (x,y)\frac{e^{-\frac{t^2}{4u}}}{u}dudy=\partial _{x_i}\int_{\mathbb R^d}\int_0^\infty T_u^V (x,y)\frac{e^{-\frac{t^2}{4u}}}{u}dudy,\quad (x,t)\in \mathbb R_+^{d+1},\, i=1,\ldots,d.
$$
According to the estimates in the bottom of \cite[p. 20]{LQWZ} there exists $C>0$ such that, when $y\in \mathbb R^d$, $x\in B(x_0,R)$ with $0<R<\rho (x_0)$ and $u>0$,
\begin{equation}\label{2.5}
|\nabla _x^2T_u^V (x,y)|\leq C\Big(R^{\frac{d}{q}-2}\sup_{z\in B(x_0,2R)}|T_u^V (z,y)|+R^{\frac{d}{q}}\sup_{z\in B(x_0,2R)}|\partial _uT_u^V (z,y)|\Big).
\end{equation}
By proceeding as in the proof of \cite[Lemma 3.7]{LQWZ} we can deduce from \eqref{2.5} that, for every $N\in \mathbb N$ there exist $C,c>0$ such that
\begin{equation}\label{2.6}
|\nabla_x^2T_u^V (x,y)|\leq C\left\{
\begin{array}{ll}
\displaystyle \frac{e^{-c\frac{|x-y|^2}{u}}}{u^{\frac{d+2-d/q}{2}}}\left(1+\frac{\sqrt{u}}{\rho (x)}+\frac{\sqrt{u}}{\rho (y)}\right)^{-N},&\sqrt{u}\leq |x-y|,\\[0.5cm]
\displaystyle\frac{e^{-c\frac{|x-y|^2}{u}}}{u^{\frac{d}{2}}|x-y|^{2-\frac{d}{q}}}\left(1+\frac{\sqrt{u}}{\rho (x)}+\frac{\sqrt{u}}{\rho (y)}\right)^{-N},&\sqrt{u}>|x-y|,
\end{array}
\right.
\end{equation}
with $x,y\in \mathbb R^d$ and $u>0$.

We are going to prove that, for every $N\in \mathbb N$ there exists $C>0$ such that
\begin{equation}\label{2.7}
|\nabla_x^2T_u^V (x,y)|\leq \frac{C}{u^{\frac{d+2-d/q}{2}}}\left(1+\frac{\sqrt{u}}{\rho (x)}+\frac{\sqrt{u}}{\rho (y)}\right)^{-N},\quad x,y\in \mathbb R^d\mbox{ and }u>0.
\end{equation}

Let $N\in \mathbb N$. By \eqref{F2} and \eqref{2.5} we obtain that, for every $x,y\in \mathbb R^d$, $u>0$ and $0<R<\rho (x)$,
\begin{align*}
|\nabla_x^2T_u^V (x,y)|&\leq  C\left(\frac{R^{\frac{d}{q}-2}}{u^{\frac{d}{2}}}+\frac{R^{\frac{d}{q}}}{u^{\frac{d}{2}+1}}\right)\left(1+\frac{\sqrt{u}}{\rho (x)}+\frac{\sqrt{u}}{\rho (y)}\right)^{-N}\\
&\leq \frac{C}{u^{\frac{d+2-d/q}{2}}}\left(1+\frac{\sqrt{u}}{\rho (x)}+\frac{\sqrt{u}}{\rho (y)}\right)^{-N}\left(\frac{R}{\sqrt{u}}\right)^{\frac{d}{q}-1}\left(\frac{\sqrt{u}}{R}+\frac{R}{\sqrt{u}}\right).
\end{align*}
\textcolor{black}{Let $u>0$ and $x\in \mathbb R^d$}. We consider the function
$$
h_u(R)=\left(\frac{R}{\sqrt{u}}\right)^{\frac{d}{q}-1}\Big(\frac{\sqrt{u}}{R}+\frac{R}{\sqrt{u}}\Big),\quad 0<R<\rho (x).
$$
Observe that this function $h_u$ is decreasing in $\left(0,\sqrt{\frac{2q-d}{d} u}\right)$. Moreover,
$$
|\nabla_x^2T_u^V (x,y)|\leq \frac{C}{u^{\frac{d+2-d/q}{2}}}\Big(1+\frac{\sqrt{u}}{\rho (x)}+\frac{\sqrt{u}}{\rho (y)}\Big)^{-N}\inf_{0<R<\rho (x)}h_u(R),\quad y\in \mathbb R^d.
$$
\textcolor{black}{Notice that
$$
\inf_{0<R<\rho (x)}h_u(R)\leq C\left(1+\frac{\sqrt{u}}{\rho (x)}\right)^2.
$$}
Indeed, if  {$\sqrt{\frac{2q-d}{d} u}\le \rho(x)$ then }
$$
\inf_{0<R<\rho (x)}h_u(R)\leq \inf_{0\leq R\leq  {\sqrt{\frac{2q-d}{d} u}}}h_u(R)=h_u\left( {\sqrt{\frac{2q-d}{d} u}}\right)= {C},
$$
while if $\rho (x)< {\sqrt{\frac{2q-d}{d} u}}$, then 
$$
\inf_{0<R<\rho (x)}h_u(R)=h_u(\rho (x))=\left(\frac{\rho (x)}{\sqrt{u}}\right)^{\frac{d}{q}}\left(1+\left(\frac{\sqrt{u}}{\rho (x)}\right)^2\right)\leq  {C}\left(1+\frac{\sqrt{u}}{\rho (x)}\right)^2.
$$
Therefore,
$$
|\nabla_x^2T_u^V (x,y)|\leq  \frac{C}{u^{\frac{d+2-d/q}{2}}}\left(1+\frac{\sqrt{u}}{\rho (x)}+\frac{\sqrt{u}}{\rho (y)}\right)^{-N}\left(1+\frac{\sqrt{u}}{\rho (x)}\right)^2,
$$
and the arbitrariness of $N$ allows us to get \eqref{2.7}. 

Now, according to \eqref{2.6} and \eqref{2.7} we have that
\begin{align*}
\int_{\mathbb R^d}|f(y)|\int_0^\infty |\nabla_x^2T_u^V (x,y)|\frac{e^{-\frac{t^2}{4u}}}{u}dudy&\leq C\left(\int_{|y|\leq 2|x|}|f(y)|\int_0^\infty \frac{e^{-\frac{t^2}{4u}}}{u^{\frac{d+2-d/q}{2}+1}}dudy\right.\\
&\hspace{-6cm}\quad +\left.\int_{|y|>2|x|}|f(y)|\left(\int_{|x-y|^2}^\infty \frac{e^{-c\frac{|x-y|^2+t^2}{u}}}{u^{\frac{d}{2}+1}|x-y|^{2-\frac{d}{q}}}du {+}\int_0^{|x-y|^2}\frac{e^{-c\frac{|x-y|^2+t^2}{u}}}{u^{\frac{d+2-d/q}{2}+1}}du\right)dy\right)\\
&\hspace{-6cm}\leq C\left(\frac{1}{t^{d+2-\frac{d}{q}}}\int_{|y|\leq 2|x|}|f(y)|dy+\int_{|y|>2|x|}\left(\frac{|f(y)|}{|x-y|^{2-\frac{d}{q}}(t+|x-y|)^d}+\frac{|f(y)|}{(t+|x-y|)^{d+2-\frac{d}{q}}}\right)dy\right)\\
&\hspace{-6cm}\leq \textcolor{black}{C\left(\frac{(1+|x|)^d}{t^{d+2-\frac{d}{q}}}+\frac{1}{|x|^{2-\frac{d}{q}}\min\{1,t^{d}\}}+\frac{1}{\min \{1,t^{d+2-\frac{d}{q}}\}}\right)}\int_{\mathbb R^d}\frac{|f(y)|}{(1+|y|)^d}dy\\
&\hspace{-6cm}\leq C\int_{\mathbb R^d}\frac{|f(y)|}{(1+|y|)^{ {d}}}dy,\quad (x,t)\in B\left((x_0,t_0),\frac{t_0}{2}\right).
\end{align*}
From the previous estimates we deduce that, for each $(x,t)\in\mathbb R^d\times (0,\infty)$ and $i=1,\ldots,d$,
$$
\int_{\mathbb R^d}f(y)\int_0^\infty \partial ^2_{x_i}T_u^V (x,y)\frac{e^{-\frac{t^2}{4u}}}{u}dudy=\partial ^2_{x_i}\int_{\mathbb R^d}f(y)\int_0^\infty T_u^V (x,y)\frac{e^{-\frac{t^2}{4u}}}{u}dudy.
$$
Thus, \eqref{2.2} is established and we conclude that
$$
\textcolor{black}{\partial ^2_ tu_f(x,t)+\frac{1}{t}\partial _tu_f(x,t) {-}\mathcal L_{V}u_f(x,t)=0},\quad x\in \mathbb R^d\mbox{ and }t>0.
$$
From the considerations above, we also conclude that $u_f\in C^1(\mathbb R^{d+1}_+)$.\\

\noindent{\bf Proof of $(iii)$}. We can write
\begin{align*}
   t\partial _tu_f(x,t) &=- {\frac{1}{2}}\int_{\mathbb R^d}f(y)\int_0^\infty T_u^V (x,y)\frac{t^2e^{-\frac{t^2}{4u}}}{\textcolor{black}{2}u^2}dudy\\
   &=- \int_{\mathbb R^d}f(y)\int_0^\infty (T_u^V (x,y)-T_u(x-y))\frac{t^2e^{-\frac{t^2}{4u}}}{\textcolor{black}{4}u^2}dudy\\
   &\quad -\int_{\mathbb R^d}f(y)\int_0^\infty T_u(x-y)\frac{t^2e^{-\frac{t^2}{4u}}}{\textcolor{black}{4}u^2}dudy,\quad x\in \mathbb R^d\mbox{ and }t>0.
\end{align*}
According to \cite[Theorem 3.1, (3.22)]{ChHW} we have that
$$
\lim_{t\rightarrow 0^+}\int_{\mathbb R^d}f(y)\int_0^\infty T_u(\cdot -y)\frac{t^2e^{-\frac{t^2}{4u}}}{4u^2}dudy=f,\quad \mbox{ in }L^1_{\rm loc}(\mathbb R^d).
$$
We are going to see that
\begin{equation}\label{*1}
\lim_{t\rightarrow 0^+}\int_{\mathbb R^d}f(y)\int_0^\infty (T_u^V (\cdot,y)-T_u(\cdot-y))\frac{t^2e^{-\frac{t^2}{4u}}}{u^2}dudy=0,\quad \mbox{ in }L^1_{\rm loc}(\mathbb R^d).    
\end{equation}
We decompose the last integral as follows:
\begin{align}\label{*2}
\int_{\mathbb R^d}f(y)\int_0^\infty (T_u^V (x,y)-T_u(x-y))\frac{t^2e^{-\frac{t^2}{4u}}}{u^2}dudy\nonumber\\
&\hspace{-4cm}=\int_{\mathbb R^d}f(y)\left(\int_0^{\rho (x)^2}+\int_{\rho (x)^2}^\infty \right)(T_u^V (x,y)-T_u(x -y))\frac{t^2e^{-\frac{t^2}{4u}}}{u^2}dudy\nonumber\\
&\hspace{-4cm}=:I_1(x,t)+I_2(x,t),\quad x\in \mathbb R^d \mbox{ and }t>0.
\end{align}

By using \eqref{F1} we get
\begin{align*}
    |I_2(x,t)|&\leq Ct^2\int_{\mathbb R^d}|f(y)|\int_{\rho (x)^2}^\infty \frac{e^{-\frac{|x-y|^2+t^2}{4u
    }}}{u^{\frac{d}{2}+2}}dudy\\
    &\leq Ct^2\left(\int_{|x-y|<\rho(x)}+\int_{\substack|x-y|\geq\rho(x)}\right)|f(y)|\int_{\rho (x)^2}^\infty \frac{e^{-\frac{|x-y|^2+t^2}{4u}}}{u^{\frac{d}{2}+2}}dudy\\
    &=:I_{2,1}(x,t)+I_{2,2}(x,t),\quad x\in \mathbb R^d\mbox{ and }t>0.
\end{align*}
We have that
\begin{align*}
I_{2,1}(x,t)&\leq Ct^2\int_{|x-y|<\rho (x)}|f(y)|\int_{\rho (x) ^2}^\infty \frac{1}{u^{\frac{d}{2}+2}}dudy {=} C\frac{t^2}{\rho (x)^{d+2}}\int_{|x-y|<\rho (x)}|f(y)|dy\\
&\leq C\frac{t^2(1+|x|+\rho (x))^d}{\rho (x)^{d+2}}\int_{\mathbb R^d}\frac{|f(y)|}{(1+|y|)^d}dy\\
&\leq C\frac{t^2(1+|x|+\rho (x))^d}{\rho (x)^{d+2}},\quad x\in \mathbb R^d\mbox{ and }t>0.
\end{align*}
On the other hand, we can write
\begin{align*}
    I_{2,2}(x,t)&\leq Ct^2\int_{|x-y|\geq\rho(x)}\frac{|f(y)|}{|x-y|^{d+2}}dy\leq C\frac{t^2}{\rho (x)^2}\int_{\mathbb R^d}\frac{|f(y)|}{(|x-y|+\rho (x))^d}dy\\
    &\leq C\frac{t^2}{\rho (x)^2}\left(\int_{|y|\leq 2|x|}+\int_{|y|>2|x|}\right)\frac{|f(y)|}{(|x-y|+\rho (x))^d}dy\\
    &\leq C\frac{t^2}{\rho (x)^2}\left(\frac{1}{\rho (x) ^d}\int_{|y|\leq 2|x|}|f(y)|dy+\int_{|y|>2|x|}\frac{|f(y)|}{(|y|+\rho (x))^d}dy\right)\\
    &\leq C\frac{t^2}{\rho (x)^2}\left(\frac{(1+|x|)^d}{\rho (x) ^d}+\int_{\mathbb R^d}\frac{|f(y)|}{(|y|+\rho (x))^d}dy\right),\quad x\in \mathbb R^d\mbox{ and }t>0.
\end{align*}

Let $\Omega$  be a compact subset of $\mathbb R^d$. Then, we can find $x_1,\ldots,x_m\in \Omega$ such that $\Omega\subset \bigcup_{j=1}^mB(x_j,\rho (x_j))$. By \cite[Lemma 2.12]{BDK}, there exists $C>1$ such that
$$
\frac{1}{C}\rho (x_j)\leq \rho (y)\leq C\rho (x_j),\quad y\in B(x_j,\rho (x_j)),\,j=1,\ldots,m.
$$
Therefore, there exist $A,B>0$ for which
\begin{equation}\label{2.10}
    A\leq \rho (y)\leq B,\quad y\in \Omega.
\end{equation}
Now, we consider $R>0$. From \eqref{2.10} and  the fact that $f\in L^1_0(\mathbb R^d)$ it follows that
$$
\int_{B(0,R)}I_2(x,t)dx\leq Ct^2,\quad t>0.
$$
Hence,
\begin{equation}\label{2.11}
    \lim_{t\rightarrow 0^+}\int_{B(0,R)}I_2(x,t)dx=0.
\end{equation}

By using \eqref{F3} we can write
$$
|I_1(x,t)|\leq Ct^2\int_{\mathbb R^d}|f(y)|\int_0^{\rho (x)^2}\Big(\frac{\sqrt{u}}{\rho (x)}\Big)^\delta |\varphi _u(x-y)|\frac{e^{-\frac{t^2}{4u}}}{u^2}dudy,\quad x\in \mathbb R^d\mbox{ and }t>0,
$$
being $\delta =2-d/q$ and $\varphi \in \mathcal S(\mathbb R^d)$ so that $\varphi _u(z)=u^{-\frac{d}{2}}\varphi (z/\sqrt{u})$, $z\in \mathbb R^d$ and $u\in (0,\infty)$.

We have that
\begin{align*}
    |I_1(x,t)|&\leq Ct^2\left(\int_{|x-y|<\rho (x)}+\int_{|x-y|\geq \rho (x)}\right)|f(y)|\int_0^{\rho (x)^2}\Big(\frac{\sqrt{u}}{\rho (x)}\Big)^\delta\frac{e^{-\frac{t^2}{4u}}}{u^2(\sqrt{u}+|x-y|)^d}dudy\\
    &=:I_{1,1}(x,t)+I_{1,2}(x,t),\quad x\in \mathbb R^d\mbox{ and }t>0.
\end{align*}
For $I_{1,2}$ we obtain
\begin{align*}
    I_{1,2}(x,t)&\leq C\frac{t^2}{\rho (x)^\delta}\int_{|x-y|\geq \rho (x)}\frac{|f(y)|}{|x-y|^d}\int_0^\infty \frac{e^{-\frac{t^2}{4u}}}{u^{2-\frac{\delta}{2}}}dudy\leq C\frac{t^\delta}{\rho (x)^\delta}\int_{\mathbb R^d}\frac{|f(y)|}{(|x-y|+\rho (x))^d}dy\\
    &\leq C\frac{t^\delta}{\rho (x)^\delta}\left(\frac{1}{\rho (x)^d}\int_{|y|\leq 2|x|}|f(y)|dy+\int_{|y|>2|x|}\frac{|f(y)|}{(|y|+\rho (x))^d}dy\right)\\
    &\leq C\frac{t^\delta}{\rho (x)^\delta}\left(\frac{(1+|x|)^d}{\rho (x)^d}\int_{\mathbb R^d}\frac{|f(y)|}{(1+|y|)^d}dy+\int_{\mathbb R^d}\frac{|f(y)|}{(|y|+\rho (x))^d}dy\right),\quad x\in \mathbb R^d\mbox{ and }t>0.
\end{align*}
Let $R>0$. By using \eqref{2.10} we get
\begin{align*}
\int_{B(0,R)}I_{1,2}(x,t)dx\leq Ct^\delta,\quad t>0,
\end{align*}
and consequently
\begin{equation}\label{2.15}
    \lim_{t\rightarrow 0^+}\int_{B(0,R)}I_{1,2}(x,t)dx=0.
\end{equation}
On the other hand, according again to \eqref{2.10} we get
\begin{align*}
   \int_{B(0,R)}I_{1,1}(x,t)dx&\leq Ct^2\int_{B(0,R)}\int_{|x-y|<\rho (x)}\int_0^{\rho (x)^2}\frac{|f(y)|e^{-\frac{t^2}{4u}}}{u^{2-\frac{\delta}{2}}(\sqrt{u}+|x-y|)^d}dudydx\\
   &\hspace{-2cm}\leq Ct^2\int_{B(0,R)}\int_{|x-y|<\rho (x)}\int_0^{\rho (x)^2}\frac{(1+|x|+\rho (x))^d|f(y)|e^{-\frac{t^2}{4u}}}{u^{2-\frac{\delta}{2}}(\sqrt{u}+|x-y|)^d(1+|y|)^d}dudydx\\
   &\hspace{-2cm}\leq  Ct^2\int_{B(0,R)}\int_{|x-y|<\rho (x)}\int_0^{\rho (x)^2}\frac{|f(y)|e^{-\frac{t^2}{4u}}}{u^{2-\frac{\delta}{2}}(\sqrt{u}+|x-y|)^d(1+|y|)^d}dudydx,\quad t>0.
\end{align*}
By \cite[Proposition 2.1]{DGMTZ}, $\rho (x)\sim \rho (y)$, provided that $|x-y|<\rho (x)$. Then, according to \eqref{2.10} there exists $c>0$ such that the set  
$$
\Omega=\big\{(x,y,u)\in \mathbb R^d\times \mathbb R^d\times (0,\infty): |x|<R,\,|x-y|<\rho (x),\,u\in (0,\rho (x)^2)\big\}
$$
is contained in 
$$
\Omega_1=\big\{(x,y,u)\in \mathbb R^d\times \mathbb R^d\times (0,\infty): |x-y|<c\rho (y),\,\frac{1}{c}\leq \rho (y)\leq c, \,u\in (0,c\rho(y)^2)\big\}.
$$
Then, we can write
\begin{align*}    
 \int_{B(0,R)}I_{1,1}(x,t)dx&\leq Ct^2\int_{\frac{1}{c}\leq \rho (y)\leq c}\int_0^{c\rho (y)^2}\int_{|x-y|<c\rho (y)^2}\frac{|f(y)|e^{-\frac{t^2}{4u}}}{u^{2-\frac{\delta}{2}}(\sqrt{u}+|x-y|)^d(1+|y|)^d}dxdudy\\
    &\leq Ct^2\int_{\frac{1}{c}\leq \rho (y)\leq c}\frac{|f(y)|}{(1+|y|)^d}\int_0^{c\rho (y)^2}\frac{e^{-\frac{t^2}{4u}}}{u^{2-\frac{\delta}{2}}}\int_0^{c\rho (y)^2}\frac{r^{d-1}}{(\sqrt{u}+r)^d}drdudy\\
    &\leq Ct^2\int_0^\infty\frac{e^{-\frac{t^2}{4u}}}{u^{2-\frac{\delta}{4}}}du\int_{\frac{1}{c}\leq \rho (y)\leq c}\frac{|f(y)|}{(1+|y|)^d}\int_0^{c\rho (y)^2}r^{\frac{\delta}{2
    }-1}drdy\\
    &\leq Ct^{\frac{\delta}{2}}\int_{\frac{1}{c}\leq \rho (y)\leq c}\frac{|f(y)|\rho (y)^\delta}{(1+|y|)^d}dy\leq Ct^{\frac{\delta}{2}}\int_{\mathbb R^d}\frac{|f(y)|}{(1+|y|)^d}dy,\quad t>0.
\end{align*}
It follows that
\begin{equation}\label{2.16}
    \lim_{t\rightarrow 0^+}\int_{B(0,R)}I_{1,1}(x,t)dx=0.
\end{equation}
By combining \eqref{2.15} and \eqref{2.16} we obtain that, 
\begin{equation}\label{2.17}
    \lim_{t\rightarrow 0^+}\int_{B(0,R)}I_1(x,t)dx=0.
\end{equation}
By putting together \eqref{*2}, \eqref{2.11} and \eqref{2.17} we conclude that \eqref{*1} holds. Thus, we have proved that 
$$
\lim_{t\rightarrow 0^+}t\partial _tu_f(\cdot ,t)=\textcolor{black}{-f},\quad \mbox{ in }L^1_{\rm loc}(\mathbb R^d).
$$
\\
\noindent{\bf Proof of $(iv)$}. According to \cite[Lemma 3.6]{ChHW}, since  $u_f\in \mathcal {AG}(\mathbb R^{d+1}_+)\cap C^1(\mathbb R^{d+1}_+)$, it follows that 
$$
\lim_{t\rightarrow 0^+}\frac{u_f(\cdot ,t)}{\log t}=-f,\quad \mbox{ in }L^1_{\rm loc}(\mathbb R^d).
$$
\\
\noindent{\bf Proof of $(v)$}. Suppose that $f$ is a Dini continuous function at the point $x\in \mathbb R^d$. We are going to see that
\begin{equation}\label{logextDini}
(\log \mathcal L_V)(f)(x)=-2\lim_{t\rightarrow 0^+}\big(u_f(x,t)+f(x)\log t\big)-f(x)h(x).
\end{equation}
We decompose $u_f$ in the following way:
\begin{align}\label{decomp}
    2u_f(x,t)&=\left(\int_{\mathbb R^d\setminus B(x,1)}+\int_{B(x,1)\setminus B(x,t)}+\int_{B(x,t)}\right)f(y)\int_0^\infty T_u^V (x,y)\frac{e^{-\frac{t^2}{4u}}}{u}dudy\nonumber\\
    &=:J_1(x,t)+J_2(x,t)+J_3(x,t),\quad t\in(0,1),
\end{align}

First we are going to show that 
\begin{equation}\label{J1}
   \lim_{t\rightarrow 0^+}J_1(x,t)= \int_{\mathbb R^d\setminus B(x,1)}f(y)\int_0^\infty \frac{T_u^V (x,y)}{u}dudy.
\end{equation}
By using \eqref{F1} we get
\begin{align*}
\left|J_1(x,t)-\int_{\mathbb R^d\setminus B(x,1)}f(y)\int_0^\infty \frac{T_u^V (x,y)}{u}dudy\right|&\leq \int_{\mathbb R^d\setminus B(x,1)}|f(y)|\int_0^\infty T_u^V (x,y)\Big|\frac{e^{-\frac{t^2}{4u}}-1}{u}\Big|dudy\\
&\hspace{-6cm}\leq Ct^2\int_{\mathbb R^d\setminus B(x,1)}|f(y)|\int_0^\infty \frac{e^{-\frac{t^2+|x-y|^2}{4u}}}{u^{\frac{d}{2}+2}}dudy\leq Ct^2\int_{\mathbb R^d\setminus B(x,1)}\frac{|f(y)|}{|x-y|^{d+2}}dy\\
&\hspace{-6cm}\leq Ct^2\int_{\mathbb R^d}\frac{|f(y)|}{(1+|x-y|)^{d+2}}dy\leq Ct^2\left(\int_{|y|\leq 2|x|}+\int_{|y|>2|x|}\right)\frac{|f(y)|}{(1+|x-y|)^d}dy\\
&\hspace{-6cm}\leq Ct^2\left((1+|x|)^d\int_{\mathbb R^d}\frac{|f(y)|}{(1+|y|)^d}dy+\int_{\mathbb R^d}\frac{|f(y)|}{(1+|y|)^d}dy\right)\leq Ct^2(1+|x|)^d,\quad t\in (0,1).
\end{align*}
Thus, \eqref{J1} is proved.

Our next objective is to see that
\begin{equation}\label{J3}
 \lim_{t\rightarrow 0^+}\left(J_3(x,t)-\int_{B(x,t)}f(y)\int_0^\infty T_u(x-y)\frac{e^{-\frac{t^2}{4u}}}{u}dudy\right)=0,
\end{equation}
and then, in virtue of \cite[(4.50)]{ChHW},  deduce that
\begin{equation}\label{J3alpha}
\lim_{t\rightarrow 0^+}J_3(x,t)=\alpha _df(x),
\end{equation}
where $\alpha _d=2\int_0^1(1+t)^{-\frac{d}{2}}t^{\frac{d}{2}-1}dt$.

\noindent From  \eqref{F3} we have that
\begin{align*}
D(x,t)&:=\left|J_3(x,t)-\int_{B(x,t)}f(y)\int_0^\infty T_u(x-y)\frac{e^{-\frac{t^2}{4u}}}{u}dudy\right|\\
&\leq \int_{B(x,t)}|f(y)|\int_0^\infty \frac{e^{-\frac{t^2}{4u}}}{u}|T_u^V (x,y)-T_u(x-y)|dudy\\
&\leq \int_{B(x,t)}|f(y)|\left(\int_0^{\rho (x)^2}\frac{e^{-\frac{t^2}{4u}}}{u}\Big(\frac{\sqrt{u}}{\rho (x)}\Big)^\delta|\varphi_u(x-y)|du+\int_{\rho (x)^2}^\infty\frac{e^{-\frac{t^2}{4u}}}{u}|\varphi_u(x-y)|du\right)dy\\
&\leq C\int_{B(x,t)}|f(y)|\left(\frac{1}{\rho (x)^\delta}\int_0^{\rho (x)^2}\frac{e^{-\frac{t^2}{4u}}}{u^{1-\frac{\delta}{2}}(\sqrt{u}+|x-y|)^d}du+\int_{\rho (x)^2}^\infty\frac{e^{-\frac{t^2}{4u}}}{u(\sqrt{u}+|x-y|)^d}du\right)dy\\
&\leq C\int_{B(x,t)}|f(y)|dy\left(\frac{1}{\rho (x)^\delta}\int_0^{\rho (x)^2}\frac{e^{-\frac{t^2}{4u}}}{u^{\frac{d-\delta}{2}+1}}du+\int_{\rho (x)^2}^\infty \frac{1}{u^{\frac{d}{2}+1}}du \right)\\
&\leq C\int_{B(x,t)}|f(y)|dy\left(\frac{1}{\rho (x)^\delta t^{d-\delta}}+\frac{1}{\rho (x)^d}\right),\quad t\in (0,1).
\end{align*}
By taking into account that $f$ is Dini continuous at $x$ we get
\begin{align*}
   \int_{B(x,t)}|f(y)|dy&\leq C\left(\int_{B(x,t)}|f(y)-f(x)|dy+t^d|f(x)|\right)\\
   &\hspace{-1.5cm}\leq C\left(\int_{B(x,t)}\sup_{\textcolor{black}{|z-x|\leq |y-x|}}|f(z)-f(x)|dy+t^d|f(x)|\right)\leq C\left(\int_0^tw_{f,x}(r)r^{d-1}dr+t^d|f(x)|\right)\\
    &\hspace{-1.5cm}\leq Ct^d\left(\int_0^1\frac{w_{f,x}(r)}
    {r}dr+|f(x)|\right)\leq Ct^d(1+|f(x)|),\quad t\in (0,1).
\end{align*}
Then, it follows that
$$
D(x,t)\leq  {C\left(\frac{t^\delta}{\rho (x)^\delta}+\frac{t^d}{\rho (x)^d}\right)(1+|f(x)|), \quad t\in (0,1).} 
$$
Since $\delta >0$, we conclude that $D(x,t)\longrightarrow 0$, as $t\rightarrow 0^+$, and \eqref{J3} is established.

Now we consider
\begin{align}\label{decompJ2}
    J_2(x,t)&=\int_{B(x,1)\setminus B(x,t)}(f(y)-f(x))\int_0^\infty T_u^V (x,y)\frac{e^{-\frac{t^2}{4u}}}{u}dudy\nonumber\\
    &\quad +f(x)\int_{B(x,1)\setminus B(x,t)}\int_0^\infty T_u^V (x,y)\frac{e^{-\frac{t^2}{4u}}}{u}dudy\nonumber\\
    &=:J_{2,1}(x,t)+J_{2,2}(x,t),\quad t\in(0,1).
\end{align}
First, we shall see that
\begin{equation}\label{J21}
 \lim_{t\rightarrow 0^+}J_{2,1}(x,t)= \int_{B(x,1)}(f(y)-f(x))\int_0^\infty \frac{T_u^V(x,y)}{u}dudy.
\end{equation}
We write
\begin{align*}
    F(x,t):=&\left|J_{2,1}(x,t)-\int_{ B(x,1)}(f(y)-f(x))\int_0^\infty \frac{T_u^V(x,y)}{u}dudy\right|&\\
    \leq &\left|\int_{B(x,1)\setminus B(x,t)}(f(y)-f(x))\int_0^\infty T_u^V (x,y)\frac{e^{-\frac{t^2}{4u}}-1}{u}dudy\right|\\
    &\quad +\left|\int_{B(x,t)}(f(y)-f(x))\int_0^\infty \frac{T_u^V (x,y)}{u}dudy\right|\\
    =:&J_{2,1,1}(x,t)+J_{2,1,2}(x,t),\quad t\in (0,1).
\end{align*}
By using \eqref{F1} and taking into account that $f$ is a Dini continuous function at $x$ we obtain
\begin{align*}
  J_{2,1,2}(x,t)&\leq C\int_{B(x,t)}|f(y)-f(x)|\int_0^\infty \frac{e^{-\frac{|x-y|^2}{4u}}}{u^{\frac{d}{2}+1}}dudy\\
  &\leq C\int_{B(x,t)}\frac{|f(y)-f(x)|}{|x-y|^d}dy\leq C\int_{B(x,t)}\frac{\sup_{|z-x|\leq |y-x|}|f(z)-f(x)|}{|x-y|^d}dy\\
  &\leq C\int_0^t\frac{w_{f,x}(r)}{r}dr,\quad t\in (0,1).
\end{align*}
Hence, $J_{2,1,2}(x,t)\longrightarrow 0$, as $t\rightarrow 0^+$. Similarly, estimate \eqref{F1} allows us to get
\begin{align*}
    J_{2,1,1}(x,t)&\leq C\int_{B(x,1)\setminus B(x,t)}|f(y)-f(x)|\int_0^\infty \frac{t^2e^{-\frac{t^2+|x-y|^2}{4u}}}{u^{\frac{d}{2}+2}}dudy\\
    &\leq C\int_{B(x,1)}\frac{t^2|f(y)-f(x)|}{(t+|x-y|)^{d+2}}dy\leq C\int_{B(x,1)}\frac{|f(y)-f(x)|}{|x-y|^d}dy\\
    &\leq C\int_0^1\frac{w_{f,x}(r)}{r}dr<\infty,\quad t\in (0,1).
\end{align*}
Then, applying the dominated convergence theorem we obtain that $J_{2,1,1}(x,t)\longrightarrow 0$, as $t\rightarrow 0^+$. Thus, we conclude that
$$
\lim_{t\rightarrow 0^+}F(x,t)=0, 
$$
and \eqref{J21} is proved.

Finally, let us show that
\begin{equation}\label{J22}
    \lim_{t\rightarrow 0^+}(J_{2,2}(x,t)+2f(x)\log t)=f(x)g(x),
\end{equation}
where 
$$
g(x)=\beta_d+\int_{B(x,1)}\int_0^\infty \frac{T_u^V(x,y)-T_u(x-y)}{u}dudy,
$$
  being $\beta_d=\displaystyle 2 \int_1^\infty (r^2+1)^{-d/2}-r^{-d})r^{d-1}dr$ and the double integral is absolutely convergent.
 
We write
\begin{align*}
    J_{2,2}(x,t)+2f(x)\log t&=f(x)\Bigg(\int_{B(x,1)\setminus B(x,t)}\int_0^\infty (T_u^V (x,y)-T_u(x-y))\frac{e^{-\frac{t^2}{4u}}}{u}dudy\\
    &\quad +\left(\int_{B(x,1)\setminus B(x,t)}\int_0^\infty T_u(x-y)\frac{e^{-\frac{t^2}{4u}}}{u}dudy+2\log t\right)\Bigg)\\
    &=:f(x)(J_{2,2,1}(x,t)+J_{2,2,2}(x,t)),\quad t\in (0,1).
\end{align*}
According to \cite[Lemma 4.1]{ChHW} we get
$$
\lim_{t\rightarrow 0^+}J_{2,2,2}(x,t)=\beta_d.
$$
Thus, to establish \eqref{J22} we only need to show that
$$
\lim_{t\rightarrow 0^+}J_{2,2,1}(x,t)=\int_{B(x,1)}\int_0^\infty \frac{T_u^V (x,y)-T_u(x-y)}{u}dudy.
$$
We decompose $J_{2,2,1}$ as follows:
\begin{align*}
J_{2,2,1}(x,t)&=\int_{B(x,1)\setminus B(x,t)}\left(\int_0^{\rho (x)^2}+\int_{\rho (x)^2}^\infty\right) (T_u^V (x,y)-T_u(x-y))\frac{e^{-\frac{t^2}{4u}}}{u}dudy\\
&=:H_1(x,t)+H_2(x,t),\quad t\in (0,1).
\end{align*}
According to \eqref{F3} we can write
\begin{align*}
     |H_1(x,t)|&\leq C\int_{B(x,1)\setminus B(x,t)}\int_0^{\rho (x)^2}\left(\frac{\sqrt{u}}{\rho (x)}\right)^\delta |\varphi _u(x-y)|\frac{e^{-\frac{t^2}{4u}}}{ {u}}dudy\\
     &\leq \frac{C}{\rho (x) ^\delta}\int_{B(x,1)}\int_0^{\rho (x)^2}\frac{u^{\frac{\delta}{2}}}{(\sqrt{u}+|x-y|)^d}\frac{e^{-\frac{t^2}{4u}}}{ {u}}dudy\\
     &\leq \frac{C}{\rho (x) ^\delta}\int_0^1\int_0^{\rho (x)^2}\frac{u^{\frac{\delta}{2} {-1}}r^{d-1}}{(\sqrt{u}+r)^d}dudr\leq  {\frac{C}{\rho (x) ^\delta}  \int_0^1\int_0^{\rho (x)^2}\frac{u^{\frac{\delta}{4}{-1}}r^{d-1}}{(\sqrt{u}+r)^{d-\delta/2}}dudr}\\
      & {=\frac{C}{\rho (x) ^\delta}  \int_0^1 r^{\delta/2-1}\int_0^{\rho (x)^2}\frac{u^{\frac{\delta}{4}{-1}}}{\left(\frac{\sqrt{u}}{r}+1\right)^{d-\delta/2}}dudr}\\
     & {\leq \frac{C}{\rho (x) ^\delta}\int_0^{\rho (x)^2}u^{\frac{\delta}{4}{-1}}du \le \frac{C}{\rho (x) ^{\delta/2}} }
     ,\quad t\in (0,1).
\end{align*}
By applying the dominated convergence theorem we obtain
$$
\lim_{t\rightarrow 0^+}H_1(x,t)=\int_{B(x,1)}\int_0^{\rho (x)^2} \frac{T_u^V (x,y)-T_u (x-y)}{u}dudy,
$$
being the double integral absolutely convergent.

On the other hand, by \eqref{F1} it follows that
$$
|H_2(x,t)|\leq C\int_{B(x,1)}\int_{\rho (x)^2}^\infty \frac{e^{-\frac{t^2+|x-y|^2}{4u}}}{u^{\frac{d}{2}+1}}dudy\leq C\int_{\rho (x)^2}^\infty \frac{du}{u^{\frac{d}{2}+1}}\leq \frac{C}{\rho (x)^d},\quad t\in (0,1),
$$
and by using again the dominated convergence theorem we get
$$
\lim_{t\rightarrow 0^+}H_2(x,t)=\int_{B(x,1)}\int_{\rho (x)^2}^\infty \frac{T_u^V (x,y)-T_u(x-y)}{u}dudy,
$$
being the double integral absolutely convergent.

The above estimations lead to \eqref{J22}.

By putting together \eqref{decomp}, \eqref{J1}, \eqref{J3alpha}, \eqref{decompJ2}, \eqref{J21},  and \eqref{J22} and according to \eqref{eq1.6} we conclude that 
\begin{align*}
    -2\lim_{t\rightarrow 0^+}\big(u_f(x,t)+f(x)\log t\big)&\\
    &\hspace{-3cm}=-\int_{\mathbb R^d\setminus B(x,1)}f(y)\int_0^\infty \frac{T_u^V (x,y)}{u}dudy-\int_{B(x,1)}\!\!\!(f(y)-f(x))\int_0^\infty \frac{T_u^V (x,y)}{u}dudy\\
    &\hspace{-3cm}\quad -f(x)\left(\int_{B(x,1)}\int_0^\infty\frac{T_u^V (x,y)-T_u(x-y)}{u}dudy+\alpha_d+\beta_d\right)\\
    &\hspace{-3cm}=((\log \mathcal L_V)f)(x)+f(x)\Big(K(x)-\int_{B(x,1)}\int_0^\infty\frac{T_u^V (x,y)-T_u(x-y)}{u}dudy-\alpha_d-\beta_d\Big)\\
    &\hspace{-3cm}=((\log \mathcal L_V)f)(x)+f(x)h(x).
\end{align*}

We now establish the representation \eqref{logextDini} in a distributional sense, assuming only that $f\in L^1_0(\mathbb R^d)$. We have to show that, for every $\varphi \in C_c^\infty (\mathbb R^d)$,
\begin{align}\label{2.26}
    \int_{\mathbb R^d}f(x)((\log \mathcal L_V)\varphi )(x)dx&=-2\lim_{t\rightarrow 0^+}\int_{\mathbb R^d}(u_f(x,t)+f(x)\log t)\varphi (x)dx\nonumber\\
    &\quad -\int_{\mathbb R^d}f(x)h(x)\varphi (x)dx.
\end{align}
Actually, as in \cite[Proposition 4.4]{ChHW}, we can prove \eqref{2.26} for $\varphi \in C_{c,D}(\mathbb R^d)$, that is, $\varphi$ is uniformly Dini continuous on $\mathbb R^d$ with compact support.

Let $\varphi \in C_{c,D}(\mathbb R^d)$ and $\Omega={\rm supp}\,\varphi$. There exists a collection $\{z_j\}_{j=1}^n\subseteq \Omega$ such that
$\Omega\subset \bigcup_{j=1}^nB(z_j,1)$. In virtue of \eqref{bound} we can write
\begin{align*}
    \int_{\mathbb R^d}|u_f(x,t)||\varphi (x)|dx&\leq \frac{C}{\min\{1,t^d\}}\int_{\Omega}|\varphi (x)|(1+|x|)^ddx\\
    &\leq \frac{C}{\min\{1,t^d\}}\sum_{j=1}^n\int_{B(z_j,1)}(|\varphi (x)-\varphi (z_j)|+|\varphi (z_j)|)dx\\
    &\leq \frac{C}{\min\{1,t^d\}}\left(\sum_{j=1}^n\int_{B(z_j,1)}\sup_{|z-z_j|\leq |x-z_j|}|\varphi (z)-\varphi (z_j)|dx+1\right)\\
    &\leq \frac{C}{\min\{1,t^d\}}\left(\sum_{j=1}^n\int_0^1\frac{w_{f,z_j}(r)}{r}r^ddr+1\right)<\infty,\quad t>0.
\end{align*}
Then, Fubini's theorem  {and the symmetry of the kernel} imply that 
\begin{equation}\label{2.27}
\int_{\mathbb R^d}u_f(x,t)\varphi (x)dx=\int_{\mathbb R^d}f(x)u_\varphi (x,t)dx,\quad t>0.
\end{equation}
On the other hand, we have that
\begin{equation}\label{2.28}
\int_{\mathbb R^d}f(x)\lim_{t\rightarrow 0^+}(u_\varphi(x,t)+\varphi (x)\log t)dx=\lim_{t\rightarrow 0^+}\int_{\mathbb R^d}f(x)(u_\varphi (x,t)+\varphi (x)\log t)dx.
\end{equation}
Indeed, since $f\in L^1_0(\mathbb R^d),$ if we prove that
\begin{equation}\label{2.29}
    |u_\varphi (x,t)+\varphi (x)\log t|\leq \frac{C}{(1+|x|)^d},\quad x\in \mathbb R^d\mbox{ and }t\in (0,1),
\end{equation}
then dominated convergence theorem leads to the identity in \eqref{2.28}.

Let $R>1$ such that $\Omega \subset B(0,R)$. By using \eqref{F1} we get
$$
|u_\varphi(x,t)+\varphi (x)\log t| {=|u_\varphi(x,t)|}\leq C\int_\Omega\frac{|\varphi (y)|}{|x-y|^d}dy\leq \frac{C}{|x|^d}\leq \frac{C}{(1+|x|)^d},\quad |x|\geq 2R\mbox{ and }t>0.
$$
On the other hand, since $\varphi\in C_{c,D}(\mathbb R^d)$ a careful reading of the proof of the pointwise representation \eqref{logextDini} allows us to conclude that there exists a constant $C>0$ such that
$$
|u_\varphi (x,t)+\varphi (x)\log t|\leq C\leq \frac{C}{(1+|x|)^d},\quad x\in B(0,R)\mbox{ and }t\in (0,1).
$$
Thus, \eqref{2.29} is proved.

By putting together \eqref{2.27} and \eqref{2.28} we obtain \eqref{2.26}.

\edproof
\section{The discrete setting}\label{sec3} 
The heat semigroup associated with $\Delta_d$ is given by the convolution 
$$
e^{u\Delta_d}f(k):=\sum_{j\in\Z}p_u(k-j)f(j),
$$
where $p_u(k)=e^{-2u}I_{|k|}(2u)$, $k\in\Z$ and $u>0$, being $I_\nu$ the modified Bessel function of the first kind and order $k\in\N\cup\{0\}.$
 According to  {\cite[(5.10.22)]{Leb}} we can write
\begin{align}\label{3.2}
    0\leq p_u(k)&=\frac{ {u}^{|k|}e^{-2u}}{ {\sqrt{\pi}}\Gamma (|k|+\frac{1}{2})}\int_{-1}^1e^{-2us}(1-s^2)^{|k|-\frac{1}{2}}ds\nonumber\\
    &\leq \frac{2u^{|k|}}{\sqrt{\pi}\Gamma (|k|+\frac{1}{2})}\int_0^1(1-s^2)^{|k|-\frac{1}{2}}ds=\frac{u^{|k|}}{ {\sqrt{\pi}}\Gamma (|k|+\frac{1}{2})}\int_0^1(1-z)^{|k|-\frac{1}{2}}z^{-\frac{1}{2}}dz\nonumber\\
    &=\frac{ {u^{|k|}}}{\Gamma (|k|+1)}\leq \black{\frac{2u^{|k|}}{\sqrt{1+|k|}}},\quad k\in \mathbb Z\mbox{ and }u>0.
\end{align}
So
\begin{equation}\label{3.3}
0\leq p_u(k)\leq \black{\frac{2}{\sqrt{1+|k|}}},\quad k\in \mathbb Z\mbox{ and }u\in (0,1].
\end{equation}
On the other hand, according to \cite[Lemma 2.1]{AL-CM} it holds that
\begin{equation}\label{3.4}
    0\leq p_u(k)\leq C\frac{u^{-\frac{1}{4}}}{\sqrt{1+|k|}},\quad k\in \mathbb Z\mbox{ and }u\geq 1.
\end{equation}
\subsection{Proof of Theorem \ref{Th1.2}}

Let $f=(f(m))_{m\in \mathbb Z}$ such that $\sum_{m\in \Z}|f(m)|(1+|m|)^{-1/2}<\infty$. \textcolor{black}{ Firstly, let us show that}
\begin{equation}\label{3.1}
u_f(n,t)=\sum_{m\in \mathbb Z}f(m)\int_0^\infty p_u(n-m)\frac{e^{-\frac{t^2}{4u}}}{u}du,\quad n\in \mathbb Z \mbox{ and }t>0.
\end{equation} 
For that it is sufficient to check that
$$
\sum_{m\in \mathbb Z}|f(m)|\int_0^\infty p_u(n-m)\frac{e^{-\frac{t^2}{4u}}}{u}du<\infty,\quad n\in \mathbb Z\mbox{ and }t>0.
$$
By using \eqref{3.3} \black{and \eqref{3.4}} we can write
\begin{align}\label{acotacion}
    \sum_{m\in \mathbb Z}|f(m)|\black{\int_0^{\infty}}p_u(n-m)\frac{e^{-\frac{t^2}{4u}}}{u}du&\leq C\sum_{m\in \mathbb Z}\frac{|f(m)|}{\sqrt{1+|n-m|}}\Big(\int_0^1\frac{e^{-\frac{t^2}{4u}}}{u}du+\int_1^\infty\frac{e^{-\frac{t^2}{4u}}}{u^{\frac{5}{4}}}du\Big)\nonumber\\
    &\hspace{-3cm}\leq C\sum_{m\in \mathbb Z}\frac{|f(m)|}{\sqrt{1+|n-m|}}\Big(\frac{1}{t^2}\int_0^1du+\frac{1}{t^{\frac{1}{4}}}\int_1^\infty\frac{du}{u^{\frac{9}{8}}}\Big)\nonumber\\
    &\hspace{-3cm}\leq C\Big(\frac{1}{t^2}+\frac{1}{t^{\frac{1}{4}}}\Big)\sqrt{1+|n|}\sum_{m\in \mathbb Z}\frac{|f(m)|}{\sqrt{1+|m|}}<\infty ,\quad n\in \Z\mbox{ and }t>0.
\end{align}
Thus, \eqref{3.1} is established. Furthermore, according to \eqref{acotacion} it follows that
$$
|u_f(n,t)|\leq C\Big(\frac{1}{t^2}+\frac{1}{t^{\frac{1}{4}}}\Big)\sqrt{1+|n|},\quad n\in \mathbb Z\mbox{ and }t>0,
$$
which leads to estimation in $(i)$.

Let us now prove property $(ii)$. Considering \eqref{3.1} we can see that
\begin{equation}\label{3.8}
\partial _tu_f(n,t)=-\sum_{m\in \mathbb Z}f(m)\int_0^\infty p_u(n-m)\frac{t}{2u^2}e^{-\frac{t^2}{4u}}du,\quad n\in \mathbb Z\mbox{ and }t>0,    
\end{equation}
 and 
 
 \begin{equation}\label{3.9}
\partial _t^2u_f(n,t)= {\sum_{m\in \mathbb Z}}f(m)\int_0^\infty p_u(n-m)\Big(\frac{t^2}{4u^3}-\frac{1}{2u^2}\Big)e^{-\frac{t^2}{4u}}du,\quad n\in \mathbb Z\mbox{ and }t>0.    
\end{equation}
Indeed, let $t_0>0$ and $n_0\in \mathbb Z$. \black{By taking into account that
$$
\left(\frac{t}{u^2}+\frac{t^2}{u^3}+\frac{1}{u^2}\right)e^{-\frac{t^2}{4u}}\leq C\left(\frac{1}{t}+\frac{1}{t^2}\right)\frac{e^{-c\frac{t^2}{u}}}{u},\quad t, u\in (0,\infty),
$$
and proceeding as in \eqref{acotacion} we get that
\begin{align*}
    \sum_{m\in \mathbb Z}|f(m)|\int_0^\infty \!\!\!p_u(n_0-m)\left(\frac{t}{2u^2}+\frac{t^2}{4u^3}+\frac{1}{2u^2}\right)e^{-\frac{t^2}{4u}}du&\leq C(n_0,t_0)\sum_{m\in \mathbb Z}\frac{|f(m)|}{\sqrt{1+|m|}},\quad \frac{t_0}{2}<t<t_0,
\end{align*}
for certain $C(n_0,t_0)>0$.} Thus,  {differentiations} inside the sums and the integrals in \eqref{3.8} and \eqref{3.9}  {are justified}. 

By combining \eqref{3.8} and \eqref{3.9} we obtain
\begin{align*}
 \partial _t^2u_f(n,t)+\frac{1}{t}\partial _tu_f(n,t)&=\sum_{m\in \mathbb Z}f(m)\int_0^\infty p_u(n-m) \Big(\frac{t^2}{4u^3}-\frac{1}{ {u^2}}\Big)e^{-\frac{t^2}{4u}}du\\
 &=\sum_{m\in \mathbb Z}f(m)\int_0^\infty p_u(n-m)\partial _u\Big(\frac{e^{-\frac{t^2}{4u}}}{u}\Big)du,\quad n\in \mathbb Z\mbox{ and }t>0.
\end{align*}
Integration by parts allows us to write, for each $n\in \mathbb Z$ and $t>0$,
$$
\int_0^\infty p_u(n-m)\partial _u\Big(\frac{e^{-\frac{t^2}{4u}}}{u}\Big)du=p_u(n-m)\frac{e^{-\frac{t^2}{4u}}}{u}\Bigg]_{u\rightarrow 0}^{u\rightarrow +\infty}-\int_0^\infty \partial _u(p_u(n-m))\frac{e^{-\frac{t^2}{4u}}}{u}du.
$$
Since $I_\nu (u)\sim e^u/\sqrt{\black{2}\pi u}$, as $u\rightarrow +\infty$, for every $\nu >-1/2$, it follows that
$$
\int_0^\infty p_u(n-m)\partial _u\Big(\frac{e^{-\frac{t^2}{4u}}}{u}\Big)du=-\int_0^\infty \partial _u(p_u(n-m))\frac{e^{-\frac{t^2}{4u}}}{u}du,\quad n\in \mathbb Z\mbox{ and }t>0.
$$
Therefore, we obtain that
\begin{align*}
    \partial _t^2u_f(n,t)+\frac{1}{t}\partial _tu_f(n,t)&=-\sum_{m\in \mathbb Z}f(m)\int_0^\infty \partial _u(p_u(n-m))\frac{e^{-\frac{t^2}{4u}}}{u}du\\
    &=-\sum_{m\in \mathbb Z}f(m)\int_0^\infty (\Delta_d p_u)(n-m)\frac{e^{-\frac{t^2}{4u}}}{u}du\\
    &=-\Delta_du_f(n,t),\quad n\in \mathbb Z\mbox{ and }t>0,
\end{align*}
and $(ii)$ is established.

\vspace*{0.2cm}

Next we show $(iii)$. From \eqref{3.8} we can write
\begin{align*}
t\partial_tu_f(n,t)&=-\sum_{\substack{m\in \mathbb Z\\m\not=n}}f(m)\int_0^\infty p_u(n-m)\frac{t^2}{2u^2}e^{-\frac{t^2}{4u}}du-f(n)\int_0^\infty p_u(0)\frac{t^2}{2u^2}e^{-\frac{t^2}{4u}}du\\
&=:J_1(n,t)+J_2(n,t),\quad n\in \mathbb Z\mbox{ and }t>0.   
\end{align*}
By using \eqref{3.2} and \eqref{3.4} we get that
\begin{align*}
    |J_1( {n},t)|&\leq C\sum_{\substack{m\in \mathbb Z\\m\not=n}}\frac{|f(m)|}{1+\sqrt{|n-m|}}\left(\int_0^1 \frac{t^2}{u}e^{-\frac{t^2}{4u}}du+\int_1^\infty \frac{t^2}{u^{9/4}}e^{-\frac{t^2}{4u}}du\right)\\
    &\leq Ct\sqrt{1+|n|}\sum_{m\in \mathbb Z}\frac{|f(m)|}{\sqrt{1+|m|}}\left(\int_0^1\frac{du}{\sqrt{u}}+\int_1^\infty \frac{du}{u^{ {7/4}}}\right)\leq Ct\sqrt{1+|n|},\quad n\in \mathbb Z\mbox{ and }t\in(0,1).
\end{align*}
Then,
$$
\lim_{t\rightarrow 0^+}J_1(n,t)=0,\quad n\in \mathbb Z.
$$
On the other hand, we have that
$$
\lim_{t\rightarrow 0^+}J_2(n,t)=-2f(n),\quad n\in \mathbb Z. 
$$
Indeed, by a change of variables we can write
$$
\int_0^\infty p_u(0)\frac{t^2}{2u^2}e^{-\frac{t^2}{4u}}du=2\int_0^\infty p_{\frac{t^2}{4v}}(0)e^{-v}dv,\quad t>0,
$$
so by taking into account that $|p_z(0)|\leq 1$, $z>0$, and that $\lim_{z\rightarrow 0^+}p_z(0)=1$, the dominated convergence theorem leads to
$$
\lim_{t\rightarrow 0^+}
\int_0^\infty p_u(0)\frac{t^2}{2u^2}e^{-\frac{t^2}{4u}}du=2\int_0^\infty e^{-v}dv=2.
$$
By combining the above results $(iii)$ is obtained.

\vspace*{0.2cm}

Next, we shall see the relation between $u_f$ and $\log(-\Delta_d)f$, that is, $(v)$ in Theorem \ref{Th1.2}. By considering \eqref{3.1} we decompose $u_f$ as follows:
\begin{align}\label{I1I2I3}
    u_f(n,t)&=\sum_{m\in \mathbb Z}f(m)\int_1^\infty p_u(n-m)\frac{e^{-\frac{t^2}{4u}}}{u}du+\sum_{\substack{m\in \mathbb Z\\m\neq n}}(f(m)-f(n))\int_0^1 p_u(n-m)\frac{e^{-\frac{t^2}{4u}}}{u}du\nonumber\\
    &\quad +f(n)\sum_{m\in \mathbb Z}\int_0^1 p_u(n-m)\frac{e^{-\frac{t^2}{4u}}}{u}du\nonumber\\
    &=:S_1(n,t)+S_2(n,t)+S_3(n,t),\quad n\in \mathbb Z\mbox{ and }t>0.
\end{align}
By using \eqref{3.4} we deduce that
\begin{align*}
    \left|S_1(n,t)-\sum_{m\in \mathbb Z}f(m)\int_1^\infty \frac{p_u(n-m)}{u}du\right|&\leq \sum_{m\in \mathbb Z}|f(m)|\int_1^\infty p_u(n-m)\frac{|e^{-\frac{t^2}{4u}}-1|}{u}du\\
    &\hspace{-3cm}\leq C\sum_{m\in \mathbb Z}|f(m)|\int_1^\infty \frac{u^{-\frac{1}{4}}}{\sqrt{1+|n-m|}}\frac{t^2}{u^2}du\leq Ct^2\sum_{m\in \mathbb Z}\frac{|f(m)|}{\sqrt{1+|n-m|}}\\
    &\hspace{-3cm}\leq Ct^2\sqrt{1+|n|}\sum_{m\in \mathbb Z}\frac{|f(m)|}{\sqrt{1+|m|}},\quad n\in \mathbb Z\mbox{ and }\black{t\in (0,1)}.
\end{align*}
Then,
\begin{equation}\label{3.5}
\lim_{t\rightarrow 0^+}S_1(n,t)=\sum_{m\in \mathbb Z}f(m)\int_1^\infty \frac{p_u(n-m)}{u}du,\quad n\in \mathbb Z.
\end{equation}

On the other hand, according to \eqref{3.2}, for every $n\in \mathbb Z$ and $t\in(0,1)$, we get
\begin{align*}
    \left|S_2(n,t)-\sum_{\substack{m\in \mathbb Z\\m\neq n}}(f(m)-f(n))\int_0^1\frac{p_u(n-m)}{u}du\right|&\\
    &\hspace{-2cm}\leq C\sum_{\substack{m\in \mathbb Z\\m\neq n}}|f(m)-f(n)|\int_0^1\frac{u^{|n-m|}}{\Gamma (|n-m|+1)}{\Big(\frac{t^2}{u}\Big)^{\frac{1}{2}}}\frac{du}{u}\\
    &\hspace{-2cm}\leq Ct\sum_{\substack{m\in \mathbb Z\\m\neq n}}\frac{|f(m)|+|f(n)|}{\Gamma (|n-m|+1)}\int_0^1\frac{du}{\sqrt{u}}\\
    &\hspace{-2cm}\leq Ct\left(\sqrt{1+|n|}\sum_{m\in \mathbb Z}\frac{|f(m)|}{\sqrt{1+|m|}}+|f(n)|\sum_{m\in \mathbb Z}\frac{1}{\Gamma (|m|+1)}\right).
\end{align*}
Therefore,
\begin{equation}\label{3.6}
\lim_{t\rightarrow 0^+}S_2(n,t)=\sum_{\substack{m\in \mathbb Z\\m\neq n}}(f(m)-f(n))\int_0^1\frac{p_u(n-m)}{u}du,\quad n\in \mathbb Z.
\end{equation}

Finally, let us show that
\begin{equation}\label{3.7}
\lim_{t\rightarrow 0^+}(S_3(n,t)+2\black{f(n)}\log t)=f(n)\left(\int_{1/4}^\infty \frac{e^{-v}}{v}dv-\int_0^{1/4}\frac{1-e^{-v}}{v}dv\right),\quad n\in \mathbb Z.
\end{equation}

Since $p_u(k)>0$, $k\in \mathbb Z$ and $u>0$, and $\displaystyle\sum_{k\in \mathbb Z}p_u(k)=1$, by using the monotone convergence theorem we can write
\begin{align*}
S_3(n,t)&=f(n)\int_0^1\frac{ {e^{-\frac{t^2}{4u}}}}{u}du\black{=f(n)\int_{t^2/4}^\infty \frac{e^{-v}}{v}dv}=f(n)\left(\int_{t^2/4}^{1/4}+\int_{1/4}^\infty\right)\frac{e^{-v}}{v}dv\\
&=f(n)\left(\int_{t^2/4}^{1/4}\frac{e^{-v}-1}{v}dv+\int_{t^2/4}^{1/4}\frac{dv}{v}+\int_{1/4}^\infty \frac{e^{-v}}{v}dv\right)\\
&=f(n)\left(\int_{t^2/4}^{1/4}\frac{e^{-v}-1}{v}dv-2\log t+\int_{1/4}^\infty \frac{e^{-v}}{v}dv\right),\quad n\in \mathbb Z\mbox{ and }t\in (0,1).
\end{align*}
Then, \eqref{3.7} is obtained.

By putting together \eqref{I1I2I3}, \eqref{3.5}, \eqref{3.6} and \eqref{3.7} and considering Theorem \ref{ThC} we deduce that
$$
\log (-\Delta_d)f(n)=\black{-}\lim_{t\rightarrow 0^+}(u_f(n,t)+2\log t)+f(n)\left(-\gamma +\int_{1/4}^\infty \frac{e^{-v}}{v}dv -\int_0^{1/4}\frac{1-e^{-v}}{v}dv\right),\quad n\in \mathbb Z,
$$
and $(v)$ es established

\vspace{0.2cm}

Finally, to prove $(iv)$, its is sufficient to note that using  \eqref{I1I2I3} and taking into account \eqref{3.5}, \eqref{3.6} and \eqref{3.7} we deduce that
$$
\lim_{t\rightarrow 0^+}\frac{u_f(n,t)}{\log t}=\lim_{t\rightarrow 0^+}\frac{S_1(n,t)+S_2(n,t)+S_3(n,t)+2f(n)\log t}{\log t}-2f(n)=-2f(n), \quad n\in \mathbb Z.
$$
 \edproof
 %%%%%%%%%%%%%%%%%%%%%%%%%%%%%%%%%%%%%%%%%%%%%%%%%%%%%%%%%%%%%%%%%%%

\bibliographystyle{acm}

\end{document}